\newtheorem*{thm*}{Theorem}
\newtheorem{thm}{Theorem}
\newtheorem{obs}[thm]{Observation}
\newtheorem{lemma}[thm]{Lemma}
\newtheorem{cor}[thm]{Corollary}
\newtheorem{rem}[thm]{Remark}
\newtheorem{conj}[thm]{Conjecture}
\numberwithin{thm}{section}
\begin{document}
\date{\today}
\author[1]{Ilan Karpas \thanks{Department of Mathematics, Hebrew University of Jerusalem.\\Email: ilan.karpas@mail.huji.ac.il}}

\title{Two Results on Union-Closed Families}

\maketitle

\begin{abstract}
 We show that there is some absolute constant $c>0$, such that for any union-closed family $\mathcal{F} \subseteq 2^{[n]}$, if \mbox{$|\mathcal{F}| \geq (\frac{1}{2}-c)2^n$}, then there is some element $i \in [n]$ that appears in at least half of the sets of $\mathcal{F}$. We also show that for any union-closed family $\mathcal{F} \subseteq 2^{[n]}$, the number of sets which are not in $\mathcal{F}$ that cover a set in $\mathcal{F}$ is at most $2^{n-1}$, and provide examples where the inequality is tight.
\end{abstract}
\section{Introduction}
The objects of study in this paper are \textit{union-closed} families. A family \newline $\mathcal{F} \subseteq 2^{[n]}$ is called \textit{union-closed}, if for every two sets $A,B \in \mathcal{F}$,\mbox{$A \cup B \in \mathcal{F}$}.  There is a wide range of literature concerning various properties of union-closed families.
For instance, Alekseev \cite{Alekseev} approximated the number of union-closed families of subsets of $[n]$, Kleitman \cite{Kleitman} gave an upper-bound for the number of basis sets for such families, and Reimer \cite{Reimer} found a tight lower-bound for the average size of a set inside a union-closed family containing $m$ sets. \vspace{5mm}

However, if you stop a combinatorialist on the street, and ask him what is the best-known conjecture regarding union-closed sets, most probably he will respond
"Frankl's conjecture", or simply "The union-closed set conjecture". This conjecture was made by Peter Frankl in the late $1970$'s. The conjecture asserts that for every finite union-closed family which contains a non-empty set, there is some element that belongs to at least half of its members. Formally, for a family $\mathcal{F} \subseteq 2^{[n]}$ and $i \in [n]$, we write $\mathcal{F}_i:=\{A \in \mathcal{F}| i \in A\}$. If $\frac{|\mathcal{F}_i|}{|\mathcal{F}|}\geq \frac{1}{2}$ we say that $i$ is \textit{abundant in} $\mathcal{F}$. If, on the otherhand, $\frac{|\mathcal{F}_i|}{|\mathcal{F}|}\leq \frac{1}{2}$  we say that $i$ is \textit{rare in} $\mathcal{F}$. Frankl's conjecture can be stated as follows:

\begin{conj} 
Let $\mathcal{F} \subseteq 2^{[n]}$ be a union-closed family, $\mathcal{F}\neq \{\emptyset\}$. Then there is some element $i \in [n]$ that is abundant in $\mathcal{F}$.
\end{conj}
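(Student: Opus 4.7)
The plan is to proceed by contradiction: assume $\mathcal{F}$ is union-closed, nonempty, and every $i \in [n]$ is rare in $\mathcal{F}$, then try to derive a contradiction. Let $\mu_i = |\mathcal{F}_i|/|\mathcal{F}|$, so the hypothesis is $\mu_i < 1/2$ for every $i$. Double-counting pairs $(i,A)$ with $i \in A \in \mathcal{F}$ gives $\sum_i \mu_i = \bar s$, the average size of a set in $\mathcal{F}$, so the assumption immediately forces $\bar s < n/2$. The goal is then to rule this out.

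The natural first lever is Reimer's theorem, mentioned in the introduction, which gives $\bar s \geq \tfrac{1}{2}\log_2 |\mathcal{F}|$ for any union-closed family. Combined with $\bar s < n/2$ this only yields $|\mathcal{F}| < 2^n$, which is useless on its own, so the heart of the argument must be to strengthen the lower bound on $\bar s$ whenever every coordinate is rare, either combinatorially (via a compression/shifting argument that pushes $\bar s$ above $n/2$) or analytically. For an analytic attack I would expand the indicator $f = \mathbf{1}_{\mathcal{F}}$ in the Fourier basis of $\{0,1\}^n$, translate the union-closed condition $f(x)f(y) \le f(x \vee y)$ into a spectral inequality, and translate the rareness of $i$ into a bound on the first-level coefficient $\widehat{f}(\{i\})$; an application of hypercontractivity (Bonami--Beckner) should then upper-bound $\|f\|_2^2 = |\mathcal{F}|/2^n$ in terms of these level-one terms, forcing $|\mathcal{F}|/2^n$ below $1/2$ under the rareness hypothesis.

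The main obstacle is the passage from the dense regime, where spectral and entropic methods gain traction, to arbitrarily small densities: a union-closed family can be extremely sparse and highly structured (say, a single chain together with $\emptyset$), and in that regime almost no analytic tool distinguishes the union-closed property from generic monotone behavior. Any density-based argument along the lines above will, at best, settle the conjecture for $|\mathcal{F}|/2^n$ bounded away from $0$ by some explicit constant, and pushing that constant down to $0$ appears to require a genuinely new combinatorial idea. Indeed, the first theorem announced in the abstract is precisely of this density type with some small $c>0$, which strongly suggests that this is the barrier Karpas himself encounters, and that the full conjecture will not fall out of the approach sketched above without substantial new input.
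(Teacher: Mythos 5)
There is a genuine gap here, and it is the whole theorem: the statement you were asked to prove is Frankl's union-closed sets conjecture itself, which is open. The paper does not prove it either --- it records it as Conjecture 1.1 and then establishes it only for families of density at least $\frac{1}{2}-c$ (Theorems 1.2 and 1.4). Your proposal never reaches a contradiction. The double count $\sum_i \mu_i = \bar{s}$ and the consequence $\bar{s} < n/2$ under the all-rare hypothesis are correct, but, as you note yourself, Reimer's bound $\bar{s} \geq \frac{1}{2}\log_2|\mathcal{F}|$ only contradicts $\bar{s}<n/2$ when $|\mathcal{F}|\geq 2^n$, i.e.\ never for a proper subfamily. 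The proposed analytic strengthening (encoding union-closedness as $f(x)f(y)\le f(x\vee y)$ and applying hypercontractivity) is not carried out, and your closing paragraph concedes that even if it were, it could at best handle densities bounded away from $0$. A plan that, by its own admission, cannot reach sparse families such as a single chain is not a proof of the conjecture; it is a research program toward a density-type partial result.

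For comparison with what the paper actually does in that density regime: rather than working with $f=\mathbf{1}_{\mathcal{F}}$ directly, it passes to the complement $2^{[n]}\setminus\mathcal{G}$ of the union-closed family, which is \emph{simply-rooted} (Lemma 2.4), and exploits the fact that each set in a simply-rooted family covers at most one set outside it (Lemma 2.5). This yields an upper bound on the positive influence $I^+(f)$ (sharpened to $I^+(f)\le 1$ via Kotlov's theorem on bulky subgraphs of the hypercube), which is played off against a Parseval-based lower bound on the total influence (Corollary 2.3); the refinement to density $\frac{1}{2}-c$ uses the FKN and Kindler--Safra structure theorems rather than hypercontractivity. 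If you want to salvage your write-up, reframe it as an attack on Theorem 1.2 or 1.4 and supply the missing upper bound on the (positive) influence; as a proof of Conjecture 1.1 it cannot stand.
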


 There have been various partial results regarding the conjecture. Let us first recommend the survey paper of  Bruhn and Schaudt \cite{Bruhn} for details.
 Vu\v{c}kovi\'{c} and \v{Z}ivkoi\'{c} \cite{Vuckovic} showed that the conjecture is shown to be true for any union-closed family $\mathcal{F}$ with a universe of at most $12$ elements, improving  on results in \cite{Bosnjak, Markovic, Morris, Gao, Poonen}. Lo Faro \cite{Lo Faro}, and later independently Roberts and Simpson \cite{Roberts}, showed that if $\mathcal{F}$ is a counterexample to the conjecture, and the universe of $\mathcal{F}$ is of size $q$, then $|\mathcal{F}| \geq 4q-1$. Falgas-Ravry \cite{Falgas-Ravry} showed that the conjecture holds for  any separating union-closed family $\mathcal{F}$ 
with a universe of $n$ elements with at most $2n$ elements (separating here means that no two distinct elements $i,j \in [n]$ appear in exactly the same sets in $\mathcal{F}$). This was slightly improved by Ma{\ss}berg \cite{Massberg}, from $2n$ to $2(n+\frac{n}{\log_2n -\log\log_2n})$. Interestingly, Hu \cite{Hu} proved that if this bound can be improved to $(2+c)n$ for some constant $c>0$ , this already implies that \textbf{any} union-closed family $\mathcal{F}$ has an element appearing in at least $\frac{c-2}{2(c-1)}|\mathcal{F}|$ sets in $\mathcal{F}$. At the moment, all that is known is that each union-closed family $\mathcal{F}$ has an element occuring in at least $\Omega(\frac{|\mathcal{F}|}{\log_2|\mathcal{F}|})$ sets in $\mathcal{F}$ \cite{Knill, Wojcik}. A Polymath project was dedicated to try to prove the conjecture, but without success (so far) \cite{Polymath}. \vspace{5mm}

Of more relevance to this paper, are results proving that the conjecture holds for union-closed families $\mathcal{F}$ with many sets, compared to the size of the universe. Cz\'{e}dli \cite{Czedli1} proved that for any union-closed family $\mathcal{F} \subset 2^{[n]}$, where
$|\mathcal{F}|\geq 2^n - 2^{n/2}$, the conjecture holds. This was significantly improved by Balla, B\'{o}llobas and Eccles \cite{Balla} to all union-closed families of subsets of $[n]$ of size at least $\frac{2}{3}2^n$, and then further improved by Eccles \cite{Eccles} to $(\frac{2}{3}-\frac{1}{104})2^n$. Our first theorem, is that the bound can be improved to $\frac{1}{2}2^n$:

\begin{thm} \label{abundant}
Let $\mathcal{F} \subset 2^{[n]}$ be a union-closed family, where $|\mathcal{F}|\geq 2^{n-1}$. Then there is some element $i \in [n]$, so that
$|\mathcal{F}_i| \geq \frac{1}{2}|\mathcal{F}|$.
\end{thm}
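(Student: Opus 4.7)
My approach is to introduce the up-closure $\mathcal{U}=\{B\subseteq[n]:A\subseteq B\text{ for some }A\in\mathcal{F}\}$ together with the \emph{cover family} $\mathcal{V}=\mathcal{U}\setminus\mathcal{F}$, and reduce the theorem to a coordinatewise inequality on $\mathcal{V}$. Write $\mathcal{F}_{\bar i}=\{A\in\mathcal{F}:i\notin A\}$ and similarly for $\mathcal{U}$ and $\mathcal{V}$. Two standard reductions come first. If $\{i\}\in\mathcal{F}$ for some $i$, the map $B\mapsto B\cup\{i\}$ is an injection of $\mathcal{F}_{\bar i}$ into $\mathcal{F}_i$, so $i$ is already abundant; assume henceforth that no singleton lies in $\mathcal{F}$. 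Also assume $[n]=\bigcup_{A\in\mathcal{F}}A$, restricting the ground set otherwise, so that in particular $[n]\in\mathcal{F}$.

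The key structural input is that every element is abundant in an up-set: pairing each $A\subseteq[n]$ with $A\triangle\{i\}$ shows $|\mathcal{U}_i|-|\mathcal{U}_{\bar i}|=c_i(\mathcal{U})\geq 0$, where $c_i(\mathcal{U})=|\{A\in\mathcal{U}:i\in A,\,A\setminus\{i\}\notin\mathcal{U}\}|$. Since $\mathcal{F}=\mathcal{U}\setminus\mathcal{V}$,
\[
|\mathcal{F}_i|-|\mathcal{F}_{\bar i}|=c_i(\mathcal{U})-\bigl(|\mathcal{V}_i|-|\mathcal{V}_{\bar i}|\bigr),
\]
so it suffices to exhibit a coordinate $i$ with $|\mathcal{V}_i|-|\mathcal{V}_{\bar i}|\leq c_i(\mathcal{U})$.

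To produce such an $i$, I would combine the bound $|\mathcal{V}|\leq 2^{n-1}$ (the paper's second theorem, provable by a pairing argument internal to $\mathcal{U}$) with an averaging step. Summing the desired inequality over $i\in[n]$ reduces to $2\sum_{A\in\mathcal{F}}|A|\geq n|\mathcal{F}|$, i.e.\ that $\mathcal{F}$ is top-heavy; Reimer's theorem delivers this only up to a small additive slack at the critical density $|\mathcal{F}|=2^{n-1}$. I would close the gap by exploiting the Hasse-diagram interface between $\mathcal{F}$ and $\mathcal{V}$: every $B\in\mathcal{V}$ strictly covers some $A\in\mathcal{F}$ and is \emph{not} itself a union of members of $\mathcal{F}$, which should force additional balance constraints on $\mathcal{V}$ coordinatewise.

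The main obstacle, I expect, is this last gap-closing step. Reimer's inequality alone is not sharp enough, and the cubic union-closed constraint $\mathbf{1}_{\mathcal{F}}(A)\mathbf{1}_{\mathcal{F}}(B)\leq\mathbf{1}_{\mathcal{F}}(A\cup B)$ does not translate into a clean linear Fourier bound on $\mathcal{F}$. A plausible route is a shifting or compression argument reducing $\mathcal{F}$ to an extremal form in which one coordinate is manifestly abundant; alternatively, one might choose $i$ to minimize $|\mathcal{V}_i|-|\mathcal{V}_{\bar i}|$ directly, and argue, via the structure inherited by $\mathcal{V}$ from union-closedness of $\mathcal{F}$, that this asymmetry cannot exceed the critical count $c_i(\mathcal{U})$ for at least one such $i$.
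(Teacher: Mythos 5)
Your proposal stops short of a proof, and the route you designate as primary cannot be repaired. Summing the target inequality $|\mathcal{F}_i|\geq|\mathcal{F}_{\bar i}|$ over $i\in[n]$ reduces, as you say, to $2\sum_{A\in\mathcal{F}}|A|\geq n|\mathcal{F}|$, i.e.\ to the \emph{averaged} form of Frankl's conjecture at density $\tfrac12$. That statement is false, not merely out of reach of Reimer's theorem: take $\mathcal{F}=2^{[n-1]}\cup\{[n]\}$, which is union-closed with $|\mathcal{F}|=2^{n-1}+1$, yet $2\sum_{A\in\mathcal{F}}|A|=(n-1)2^{n-1}+2n<n(2^{n-1}+1)=n|\mathcal{F}|$ for all $n\geq 3$ (here every $i\in[n-1]$ is abundant, so the theorem holds, but the coordinate average is below $\tfrac{n}{2}$). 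This is exactly why the averaged version is only established from density $\tfrac23$ upward (Balla--Bollob\'as--Eccles); uniform averaging over coordinates is the wrong weighting at density $\tfrac12$, so no sharpening of Reimer can close your gap. Your fallback suggestions (a shifting/compression argument, or choosing $i$ to minimize $|\mathcal{V}_i|-|\mathcal{V}_{\bar i}|$ and appealing to unspecified ``balance constraints'' on $\mathcal{V}$) are precisely the missing content, and you acknowledge as much. A smaller issue: the bound $|\mathcal{V}|\leq 2^{n-1}$ for the full up-closure difference is not the paper's Theorem 1.3 (which bounds only the immediate covers $\partial^+\mathcal{F}\setminus\mathcal{F}$); it is instead trivial from $|\mathcal{V}|\leq 2^n-|\mathcal{F}|$, and in any case it is not used anywhere decisive in your sketch.

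For contrast, the paper works with the complement $\mathcal{G}=2^{[n]}\setminus\mathcal{F}$, which is \emph{simply-rooted}, and carries out the argument in Fourier language. The two inputs are: (a) each member of a simply-rooted family covers at most one non-member (Lemma 2.7), which gives the upper bound $I^+(f)\leq 1-2\delta$; and (b) the contradiction hypothesis makes every level-one coefficient $\hat f(i)=I_i^+(f)-I_i^-(f)$ lie strictly in $(0,1)$, so that $I^+(f)-I^-(f)=\sum_i\hat f(i)>\sum_i\hat f(i)^2=W^1(f)$, which combined with $I(f)\geq 2-W^1(f)-2\hat f(\emptyset)^2$ yields $I^+(f)>1-4\delta^2$. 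The step $\sum_i\hat f(i)>\sum_i\hat f(i)^2$ is in effect a coordinate weighting by $\hat f(i)$ rather than by $1$, and this is what evades the failure of the uniform average illustrated above. Your identity $|\mathcal{F}_i|-|\mathcal{F}_{\bar i}|=c_i(\mathcal{U})-(|\mathcal{V}_i|-|\mathcal{V}_{\bar i}|)$ is a reasonable starting point, but some analogue of this non-uniform weighting (or of the covering bound (a)) has to be found before the approach can succeed.
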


We use basic Boolean-Analysis techniques to prove this result. To the best of our knowledge, Boolean-Analysis has not been used before to tackle Frankl's conjecture.

The second main theorem of this paper, regards the maximum number  of sets in  the \textit{upper-shadow} of a union-closed family, which are not in the family itself.

For a subset $A \subseteq [n]$, The upper-shadow of $A$ with respect to $n$, denoted as $\partial^+A$, is the following:

$$\partial^+A = \{A \cup \{i\}| i \in [n] \setminus A\}$$.

 For a family of sets $\mathcal{F} \subset 2^{[n]}$, the upper-shadow of the family is just the union of the upper-shadow of all sets in $\mathcal{F}$:

$$\partial^+\mathcal{F}=\bigcup_{A \in \mathcal{F}}\partial^+A$$.

We show that the upper shadow of a union-closed family can not be too large:

\begin{thm}\label{upper-shadow}
Let $\mathcal{F} \subseteq 2^{[n]}$ be a union-closed family. Then

$$ |\partial^+ \mathcal{F}\setminus \mathcal{F}| \leq 2^{n-1}.$$ 
\end{thm}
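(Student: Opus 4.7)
The plan is to recast the count as a sum over edges of the Boolean cube and then dispatch the cases by size. The key preliminary exploits union-closedness to pin down a unique ``down-witness'': for every $B\in\partial^+\mathcal{F}\setminus\mathcal{F}$ the element $i(B)\in B$ for which $B\setminus\{i(B)\}\in\mathcal{F}$ is unique, since two distinct witnesses $i\neq i'$ would give $B=(B\setminus\{i\})\cup(B\setminus\{i'\})\in\mathcal{F}$, contradicting $B\notin\mathcal{F}$. Hence $B\mapsto(B\setminus\{i(B)\},i(B))$ yields a bijection between $\partial^+\mathcal{F}\setminus\mathcal{F}$ and the set of pairs $(A,j)$ with $A\in\mathcal{F}$, $j\notin A$ and $A\cup\{j\}\notin\mathcal{F}$. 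In other words, $|\partial^+\mathcal{F}\setminus\mathcal{F}|$ counts the upward edges leaving $\mathcal{F}$ in the Hamming graph on $2^{[n]}$.

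Since every such edge exits to a distinct element of $2^{[n]}\setminus\mathcal{F}$, we obtain the trivial bound $|\partial^+\mathcal{F}\setminus\mathcal{F}|\le 2^n-|\mathcal{F}|$, which already settles the theorem whenever $|\mathcal{F}|\ge 2^{n-1}$. For the opposite regime $|\mathcal{F}|<2^{n-1}$ I would proceed by induction on $n$. The easy sub-case is when some $k\in[n]$ lies in no member of $\mathcal{F}$: then $\mathcal{F}\subseteq 2^{[n]\setminus\{k\}}$, and partitioning $\partial^+\mathcal{F}\setminus\mathcal{F}$ by whether $k\in B$ produces exactly $|\mathcal{F}|$ sets of the form $A\cup\{k\}$ together with a residual contribution $|\partial^+_{[n]\setminus\{k\}}\mathcal{F}\setminus\mathcal{F}|\le 2^{n-1}-|\mathcal{F}|$ (the trivial bound applied in $2^{[n-1]}$); these two pieces sum to at most $2^{n-1}$.

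The main obstacle is the remaining sub-case, in which every element of $[n]$ lies in some member of $\mathcal{F}$ (so $[n]\in\mathcal{F}$) yet $|\mathcal{F}|<2^{n-1}$. Here I would split $\mathcal{F}$ on an element $k$, taking $\mathcal{F}_0=\{A\in\mathcal{F}:k\notin A\}$ and $\mathcal{F}_1'=\{A\setminus\{k\}:A\in\mathcal{F},\,k\in A\}$ --- both union-closed in $2^{[n-1]}$, with $\mathcal{F}_1'$ absorbing unions with $\mathcal{F}_0$ --- and use the layer-by-layer decomposition
\[
|\partial^+\mathcal{F}\setminus\mathcal{F}| \;=\; |\mathcal{F}_0\setminus\mathcal{F}_1'| + |\partial^+_{[n-1]}\mathcal{F}_0\setminus\mathcal{F}_0| + |\partial^+_{[n-1]}\mathcal{F}_1'\setminus\mathcal{F}_1'|.
\]
The inductive hypothesis controls the two shadow terms, while the difficulty is to control the correction $|\mathcal{F}_0\setminus\mathcal{F}_1'|$. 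The hardest step will be to choose $k$ cleverly --- plausibly via a compression or shifting argument that reduces $\mathcal{F}$ to a down-closed family of the form $2^S$ with $|S|=n-1$ (the extremal configuration, where the count is a direct calculation matching $2^{n-1}$) --- so that the correction combines with the inductive estimates to yield a total of at most $2^{n-1}$. The family $\mathcal{F}=2^{[n-1]}$ shows the resulting bound is sharp.
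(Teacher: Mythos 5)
Your opening moves are sound: the uniqueness of the down-witness (two witnesses $i\neq i'$ would give $B=(B\setminus\{i\})\cup(B\setminus\{i'\})\in\mathcal{F}$, contradicting $B\notin\mathcal{F}$), the resulting identification of $|\partial^+\mathcal{F}\setminus\mathcal{F}|$ with the number of upward boundary edges, and the trivial bound $|\partial^+\mathcal{F}\setminus\mathcal{F}|\le 2^n-|\mathcal{F}|$ are all correct, and they do dispose of the case $|\mathcal{F}|\ge 2^{n-1}$ and of the sub-case where some element lies in no member. But the main case ($|\mathcal{F}|<2^{n-1}$ with every element covered) is not proved; it is a plan, and as set up the induction cannot close. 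The inductive hypothesis applied to $\mathcal{F}_0$ and $\mathcal{F}_1'$ already yields $2^{n-2}+2^{n-2}=2^{n-1}$ for the two shadow terms, leaving no room at all for the correction $|\mathcal{F}_0\setminus\mathcal{F}_1'|$, which is in general nonzero for every choice of $k$ (e.g.\ $\mathcal{F}=\{\emptyset,[n]\}$ has $\mathcal{F}_0\setminus\mathcal{F}_1'=\{\emptyset\}$ for all $k$). Moreover your displayed decomposition is only an inequality $\le$ (a set can lie in both $\mathcal{F}_0\setminus\mathcal{F}_1'$ and $\partial^+_{[n-1]}\mathcal{F}_1'\setminus\mathcal{F}_1'$), which makes the target even harder to reach. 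The appeal to ``a compression or shifting argument'' is not substantiated: standard compressions do not obviously preserve union-closedness while controlling the exterior upper shadow, and no candidate operation is exhibited. So the small-family regime --- which is the entire content of the theorem beyond the trivial bound --- remains open in your write-up.

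The paper's proof supplies the missing idea, and it is not an induction. Writing $\mathcal{G}$ for the union-closed family and $\mathcal{H}=2^{[n]}\setminus\mathcal{G}$ for its (``simply-rooted'') complement, the sets of $\partial^+\mathcal{G}\setminus\mathcal{G}$ are exactly the sets $B\in\mathcal{H}$ with a unique down-witness $i(B)$; these are partitioned by the value of $i(B)$ into classes $\mathcal{H}_i$. One checks, using union-closedness, that no hypercube edge joins two distinct classes, while $\mathcal{H}_i$ contains no edge in direction $i$ because all its members contain $i$; hence no connected component of the induced subgraph on $\bigcup_i\mathcal{H}_i$ has edges in all $n$ directions. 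Kotlov's theorem (any induced subgraph of $Q_n$ on more than $2^{n-1}$ vertices has a component with edges in all $n$ directions) then forces $\bigl|\bigcup_i\mathcal{H}_i\bigr|\le 2^{n-1}$. Some external input of this strength (or an equivalent isoperimetric-type fact) appears to be what your argument needs and lacks.
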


Note that this is tight, for instance by considering the family  \mbox{$\mathcal{F}=\{A \subseteq [n]|1 \notin A\}$}.\\ \vspace{3mm}
Lastly, we use Theorem \ref{upper-shadow}, combined with more advanced Boolean-Analysis results, to improve slightly on Theorem \ref{abundant}. We show:

\begin{thm} \label{abundant-2}
Let $\mathcal{F} \subset 2^{[n]}$ be a union-closed family, where $|\mathcal{F}|\geq (\frac{1}{2}-c)2^n$. Then there is some element $i \in [n]$, so that
$|\mathcal{F}_i| \geq \frac{1}{2}|\mathcal{F}|$.
\end{thm}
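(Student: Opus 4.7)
The plan is to argue by contradiction, using Theorem~\ref{upper-shadow} to set up a per-coordinate inequality on the Fourier coefficients of $\mathbf{1}_\mathcal{F}$, and then invoking a quantitative Boolean-analytic stability theorem near density $\tfrac{1}{2}$ to reach a contradiction. Suppose $\mathcal{F}$ is union-closed with $|\mathcal{F}| \geq (\tfrac{1}{2}-c)2^n$ and no element of $[n]$ is abundant; by Theorem~\ref{abundant} we may assume $|\mathcal{F}| < 2^{n-1}$, so $\mu := |\mathcal{F}|/2^n$ lies in the narrow window $[\tfrac{1}{2}-c, \tfrac{1}{2})$. Writing $f := \mathbf{1}_{\mathcal{F}}$ as a $\{0,1\}$-valued function on $\{-1,+1\}^n$ with the convention $x_i = +1 \Leftrightarrow i \notin A$, the non-abundance hypothesis is precisely the sign condition $\hat f(\{i\}) > 0$ for every $i \in [n]$.

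The first step is to extract quantitative information from Theorem~\ref{upper-shadow}. For each $i \in [n]$, the map $A \mapsto A \cup \{i\}$ injects $\mathcal{F}_{\bar i}$ into $\mathcal{F}_i \cup (\partial^+\mathcal{F}\setminus\mathcal{F})_i$, yielding $|\mathcal{F}_{\bar i}| - |\mathcal{F}_i| \leq |(\partial^+\mathcal{F}\setminus\mathcal{F})_i|$, which in Fourier terms reads $2^n\,\hat f(\{i\}) \leq |(\partial^+\mathcal{F}\setminus\mathcal{F})_i|$. Summing over $i$ and applying Theorem~\ref{upper-shadow} gives a global bound of the form $\sum_i \hat f(\{i\}) \leq 2^{-n}\sum_{B \in \partial^+\mathcal{F}\setminus\mathcal{F}} |B|$, producing simultaneously a total and a per-coordinate control on the (all positive) level-$1$ Fourier coefficients of $f$ in terms of the upper-shadow data of $\mathcal{F}$.

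The final step is to invoke a sharp Boolean stability theorem, such as Friedgut--Kalai--Naor or its refinement by Kindler--Safra, which asserts that a $\{0,1\}$-valued function with $\mu \approx \tfrac{1}{2}$ whose level-$1$ Fourier weight is close to $\mathrm{Var}(f) = \mu(1-\mu)$ must lie within $o(1)\cdot 2^n$ Hamming distance of a dictator $\mathbf{1}_{\{x_j = +1\}}$, i.e., $\mathcal{F}$ is close to the explicit family $\{A : j \notin A\}$ for some $j \in [n]$. In that dictator family every $i \neq j$ is abundant (with equality), and a small union-closed perturbation cannot simultaneously push every such $i$ below the abundance threshold, yielding the required contradiction once $c$ is chosen small enough. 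The main obstacle is the bridging step: non-abundance provides only a \emph{sign} condition on each $\hat f(\{i\})$, whereas FKN-type stability requires a quantitative \emph{lower} bound on the level-$1$ Fourier mass close to $\mathrm{Var}(f)$. Obtaining such a lower bound requires combining the per-coordinate inequality above with an upper bound on the higher-level Fourier mass $W^{\geq 2}(f)$, perhaps via hypercontractivity or a further application of the union-closed property; it is in this calibration that the numerical value of the constant $c$ is ultimately pinned down.
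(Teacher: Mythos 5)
There is a genuine gap, and you have correctly located it yourself: the step from the sign condition $\hat f(\{i\})>0$ plus the upper-shadow bound to the hypothesis of a stability theorem is exactly the heart of the paper's argument, and your proposal does not supply it. The paper fills this gap with an ``influence sandwich'' rather than hypercontractivity. Passing to the simply-rooted complement $f$ with $|f^{-1}(-1)|=(\frac12+\delta)2^n$, Theorem~\ref{upper-shadow} gives $I^+(f)\le 1$, and positivity of all level-$1$ coefficients gives $I^+(f)-I^-(f)=\sum_i\hat f(i)\ge\sqrt{\sum_i\hat f(i)^2}=\sqrt{W^1(f)}$, hence the upper bound $I(f)\le 2-\sqrt{W^1(f)}$. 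Against this one plays the spectral lower bound of Corollary~\ref{cor}, $I(f)\ge k-\sum_{i<k}(k-i)W^i(f)$, with $k=2$ and $k=3$. The $k=2$ case yields $\sqrt{W^1(f)}\,(1-\sqrt{W^1(f)})<8\delta^2$, which forces a \emph{dichotomy}: either $W^1(f)\ge 1-O(\delta^2)$, or $W^1(f)=O(\delta^4)$, in which case the $k=3$ inequality forces $W^2(f)\ge 1-O(\delta^2)$. Your proposal misses this dichotomy entirely; in particular you treat Kindler--Safra as an optional refinement of FKN, whereas the proof genuinely needs both theorems, one for each branch (and in the level-$2$ branch the approximator is $\pm\chi_{\{i,j\}}$ or a degree-$2$ bent-like function, not a dictator, so your ``close to $\{A: j\notin A\}$'' conclusion is not even available there).

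Your proposed endgame is also harder than necessary and would itself need a nontrivial argument: you want to show that a small union-closed perturbation of a dictator family cannot make every coordinate non-abundant. The paper avoids this entirely by observing that every approximating function appearing in FKN and in Kindler--Safra is \emph{balanced}, while $\hat f(\emptyset)=-2\delta\neq 0$ forces $\mathrm{dist}(f,g)\ge\delta$ for any balanced $g$; since both stability theorems give distance $O(\delta^2)$, choosing $c$ small enough that $\delta>O(\delta^2)$ yields the contradiction with no structural analysis of $\mathcal{F}$ at all. Finally, your summed inequality $\sum_i\hat f(\{i\})\le 2^{-n}\sum_{B\in\partial^+\mathcal F\setminus\mathcal F}|B|$ is too weak as stated (the right-hand side can be of order $n$); the usable form of Theorem~\ref{upper-shadow} is the directional-influence bound $I^+(f)\le 1$, obtained by noting that each set of $\partial^+\mathcal F\setminus\mathcal F$ corresponds to a set of the complement rooted in exactly one element, hence contributes exactly one boundary edge in its unique root direction.
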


Although Theorem \ref{abundant} is, of course, an immediate consequence of Theorem \ref{abundant-2}, nevertheless we have decided to include the proof of the former, both because the proof is simpler and because its proof can be extended naturally to a more general setting than union-closed families (see section $3$ for details).

The structure of the paper is as follows:

in section $2$, we provide the necessary definitions and tools from Boolean-Analysis, and prove some basic properties of union-closed families. In section $3$ we prove Theorem \ref{abundant}, and in section $4$ we prove Theorem \ref{upper-shadow}. In section $5$ we prove Theorem \ref{abundant-2}, and  finally, in section $6$, we discuss some implications and open problems stemming from this paper's results.

\section{Preliminaries}

\subsection{Boolean Analysis}
In most of this subsection we provide basic definitons and facts from Boolean-Analysis. Towards the end we cite two non-trivial theorems in Boolean-Analysis.

We identify subsets of $[n]$ with boolean strings of length $n$, by associating with each $S \subseteq [n]$ the string $x^S=x^S_1\dots x^S_n \in \{0,1\}^n$,
where $x^S_i=1$ iff $i \in S$. 

We define an inner product on boolean vectors $\langle . , . \rangle:\{0,1\}^n\times \{0,1\}^n \to \mathbb{Z}$. For $x=x_1\dots x_n$, $y=y_1\dots y_n$, their inner product is

 $$\langle x,y \rangle=\sum_{i=1}^n x_iy_i .$$

The space of boolean functions itself also has an inner product.
For functions $f,g:\{0,1\}^n \to \{-1,1\}$, their inner product is taken to be:

$$ \langle f, g \rangle = \mathbb{E}_{x \sim \{0,1\}^n} f(x)g(x).$$

Notice that this number is always between $-1$ and $1$. The \textit{distance} between $f$ and $g$ is then defined as

$$dist(f,g)=\frac{1}{2}(1-\langle f, g \rangle).$$

The distance between two classes of boolean functions on $n$-coordinates $\mathcal{A}$ and $\mathcal{B}$ is the minimal distance between a function in $\mathcal{A}$ and a function in $\mathcal{B}$.
 
A special role is played by the so called \textit{character functions}. For every $S \subseteq [n]$ there exists a unique character function of $S$,  $\chi_S:\{0,1\}^n \to \{-1,1\}$, which is defined thus:

$$\chi_S(x)=(-1)^{\langle x, x_S \rangle}.$$

In the special case that $|S|=1$, the function $\chi_S(x)$ is known as a \textit{dictator}. Similarly, in this case, $-\chi_S(x)$ is called an \textit{anti-dictator}. The set of all character functions is an orthonormal basis (with the inner product we have defined) for the space of boolean functions $f:\{0,1\}^n \to \{-1,1\}$. For a boolean function $f$, we define its \textit{fourier coefficient} for $S \subseteq [n]$ to be 

$$\hat{f}(S)=\langle f,\chi_s\rangle.$$

Notice that this is the coefficient of $\chi_s$ in the unique representation of $f$ as a linear combination of the characters. We call all fourier-coefficients of sets of size $k$ the \textit{level $k$ coefficients of $f$}. The sum of squares of all level $k$ coefficients of $f$ is called the \textit{level-$k$ weight of $f$}, and is denoted as

$$W^k(f):=\sum_{|S|=k}\hat{f}(S)^2.$$

At the heart of boolean analysis lays the following identity, known as Parseval's identity:
\begin{lemma} \label{Parseval}
For any $f:\{0,1\}^n \to \{-1,1\}$ 

\begin{equation}
\sum_{k=0}^n W^k(f)=1
\end{equation}
\end{lemma}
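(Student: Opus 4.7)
The plan is to prove Parseval's identity by exploiting the fact, already asserted in the paper, that the characters $\{\chi_S\}_{S \subseteq [n]}$ form an orthonormal basis for the space of real-valued functions on $\{0,1\}^n$ with the inner product $\langle f,g\rangle=\mathbb{E}_x[f(x)g(x)]$. The first step is to verify orthonormality directly. Using bilinearity of $\langle x,y\rangle$ one has $\chi_S(x)\chi_T(x)=(-1)^{\langle x,x_S\rangle+\langle x,x_T\rangle}=(-1)^{\langle x,x_{S\triangle T}\rangle}=\chi_{S\triangle T}(x)$. Taking expectation over a uniform $x\in\{0,1\}^n$, the expectation factors coordinate-wise as $\prod_{i\in S\triangle T}\mathbb{E}[(-1)^{x_i}]$, which equals $0$ whenever $S\triangle T\neq\emptyset$, while $\chi_\emptyset\equiv 1$. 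Hence $\langle \chi_S,\chi_T\rangle=\delta_{S,T}$.

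Next, I would write $f$ in its Fourier expansion $f=\sum_{S\subseteq[n]}\hat f(S)\chi_S$; that such an expansion exists is the basis property, and that the coefficients agree with the definition $\hat f(S)=\langle f,\chi_S\rangle$ follows immediately from orthonormality by taking the inner product of both sides with $\chi_S$. Computing $\langle f,f\rangle$ by expanding one copy in the basis and killing cross terms via orthonormality gives
$$\langle f,f\rangle=\sum_{S,T\subseteq[n]}\hat f(S)\hat f(T)\langle\chi_S,\chi_T\rangle=\sum_{S\subseteq[n]}\hat f(S)^2=\sum_{k=0}^n W^k(f),$$
where the last equality is just a regrouping of the sum by $|S|$.

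Finally, the assumption that $f$ is $\{-1,1\}$-valued enters in exactly one place: it forces $f(x)^2=1$ for every $x$, so that $\langle f,f\rangle=\mathbb{E}_x[f(x)^2]=1$. Combining with the previous display yields $\sum_{k=0}^n W^k(f)=1$, as claimed. There is no genuine obstacle here—Parseval's identity is a standard textbook fact, and the only arithmetic step is the one-line calculation that $\mathbb{E}[\chi_R]$ vanishes for $R\neq\emptyset$. I note in passing that the same argument, without the $\pm 1$ hypothesis, gives the more general identity $\sum_S\hat f(S)^2=\mathbb{E}[f^2]$, but the lemma as stated follows immediately by specializing to the Boolean case.
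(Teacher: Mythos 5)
Your proof is correct and complete: the orthonormality computation, the Fourier expansion, and the observation that $f^2\equiv 1$ are exactly the standard argument. The paper itself states Parseval's identity without proof, treating it as a known fact from Boolean analysis, so your write-up simply supplies the canonical textbook derivation that the paper implicitly relies on.
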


For any coordinate $i$, we define the $i$th positive (negative) influence, $I_i^+(f)$ thus:

\begin{flalign*}
&I_i^+(f)= \mathbb{P}_{S \sim [n] \setminus \{i\}}(f(x^S)= -1 \wedge f(x^{S \cup \{i\}})= 1)\\
(&I_i^-(f)=\mathbb{P}_{S \sim [n] \setminus \{i\}}(f(x^S)=1 \wedge f(x^{S \cup \{i\}})=-1))
\end{flalign*}

In other words, if we partition $\{0,1\}^n$ to $2^{n-1}$ pairs of the form $(x, x\oplus e_i)$ where $x_i=0$, then the $i$th positive (negative) influence is the fraction of all such pairs for which $f(x)=1$ and $f(x\oplus e_i)=-1$ ($f(x)=-1$ and \mbox{$f(x \oplus e_i)=1$)}. We then define the \textit{$i$th influence}, $I_i(f)$, to be the sum of these two:

$$I_i(f)=I_i^-(f)+I_i^-(f).$$

The \textit{positive (negative)} influence of $f$ is:

$$I^+(f)=\sum_{i=1}^n I_i^+(f)\quad (I^-(f)=\sum_{i=1}^n I_i^-(f)),$$

and the influence of $f$ is simply the sum of the positive and the negative influences:

$$I(f)=I^+(f)+I^-(f).$$

The reader new to the subject might want to get some feel for the definitions, by proving to himself the following observation:
\begin{obs} \label{levels}
Let $f: \{0,1\}^n \to \{-1,1\}$ be a boolean function. Then:
\begin{enumerate}
\item $\hat{f}(\emptyset)=1-2^{1-n}|f^{-1}(-1)|$.
\item For every $i \in [n]$, $\hat{f}(i)=I_i^+(f)-I_i^-(f)$.
\end{enumerate}
\end{obs}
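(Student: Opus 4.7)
Both parts follow by directly unpacking the definition of the Fourier coefficient $\hat{f}(S)=\langle f,\chi_S\rangle=\mathbb{E}_x[f(x)\chi_S(x)]$, and this is exactly what I would propose to do.

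For part (1), the plan is to observe that $\chi_\emptyset(x)=(-1)^{\langle x,\mathbf{0}\rangle}\equiv 1$, so
\[
\hat{f}(\emptyset)=\mathbb{E}_x[f(x)]=\frac{|f^{-1}(1)|-|f^{-1}(-1)|}{2^n}.
\]
Substituting $|f^{-1}(1)|=2^n-|f^{-1}(-1)|$ and rearranging gives $\hat{f}(\emptyset)=1-2^{1-n}|f^{-1}(-1)|$, which is the desired identity. No work beyond elementary algebra is required here.

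For part (2), the key observation is that $\chi_{\{i\}}(x)=(-1)^{x_i}$, equal to $+1$ when $x_i=0$ and $-1$ when $x_i=1$. My plan is to partition $\{0,1\}^n$ into the $2^{n-1}$ pairs $(x,x\oplus e_i)$ indexed by $x$ with $x_i=0$, as suggested by the paper's own description of the influences. The contribution of one such pair to $\hat{f}(\{i\})=\mathbb{E}_x[f(x)(-1)^{x_i}]$ is $\tfrac{1}{2^n}\bigl(f(x)-f(x\oplus e_i)\bigr)$, which vanishes on edges where $f$ is constant and equals $\pm 2/2^n$ on edges where $f$ flips, with the sign determined by the direction of the flip. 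Grouping the non-constant edges by type and matching them against the definitions of $I_i^+(f)$ (which counts the pairs with $f(x)=-1$, $f(x\oplus e_i)=1$) and $I_i^-(f)$ (which counts the opposite orientation) then collapses the sum to the signed difference of influences stated in the observation.

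Since both claims are obtained by direct unraveling of the definitions, I do not anticipate any real obstacle; the only care required is the bookkeeping about which endpoint of each edge has $x_i=0$ and how this determines the sign of $\chi_{\{i\}}$. In particular, no use of Parseval's identity or any nontrivial Fourier fact is needed — the observation is precisely the warm-up exercise the authors intend.
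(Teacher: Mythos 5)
Your overall strategy is the right one (and the only reasonable one): the paper gives no proof of this observation --- it is explicitly left as an exercise --- and direct expansion of $\hat f(S)=\mathbb{E}_x[f(x)\chi_S(x)]$ is exactly what is intended. Part (1) as you present it is complete and correct. For part (2), however, the one step you explicitly defer (``the bookkeeping about which endpoint of each edge has $x_i=0$'') is precisely where the argument does not close as written. Take an edge of the type you attribute to $I_i^+$, i.e.\ $f(x)=-1$ at the endpoint with $x_i=0$ and $f(x\oplus e_i)=1$. Since $\chi_{\{i\}}=+1$ at the bottom endpoint and $-1$ at the top, its contribution to $\hat f(\{i\})$ is $\tfrac{1}{2^n}\bigl(f(x)-f(x\oplus e_i)\bigr)=\tfrac{1}{2^n}(-1-1)=-2^{1-n}$, which is \emph{negative}. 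Summing, each $I_i^+$-type edge contributes $-2^{1-n}$ and each $I_i^-$-type edge contributes $+2^{1-n}$, so with the definitions you quote the computation collapses to $\hat f(i)=I_i^-(f)-I_i^+(f)$, the negative of the stated identity. A sanity check: for $n=1$ and $f=\chi_{\{1\}}$ one has $\hat f(1)=1$, while $f(x^\emptyset)=1$ and $f(x^{\{1\}})=-1$ give $I_1^+=0$, $I_1^-=1$ under the displayed definition.

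The underlying cause is an inconsistency in the paper itself: the displayed formula defines $I_i^+$ by ``$f(x^S)=-1\wedge f(x^{S\cup\{i\}})=1$,'' while the prose immediately after it describes the positive influence as the fraction of pairs with ``$f(x)=1$ and $f(x\oplus e_i)=-1$'' --- the opposite orientation. The observation (and the later uses of $I^+$, e.g.\ the claim that $I^+(f)$ counts the singly-rooted sets of $\mathcal F=f^{-1}(-1)$) is correct under the prose convention and false under the displayed one. So your proof is not wrong in spirit, but you cannot simultaneously cite the displayed definition of $I_i^\pm$ and assert that the sum ``collapses to the signed difference stated in the observation'': you must either carry the signs through and adopt the prose convention (swapping the roles of $I_i^+$ and $I_i^-$ relative to the displayed formula), or note explicitly that the identity holds with the two influences interchanged. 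As it stands, the final sentence of your part (2) asserts agreement at exactly the point where a careful reader would find disagreement.
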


The influence of a function $f$ has a nice analytic expression, whose proof can be found in any standard introduction to the subject (see, e.g., \cite{O'Donnell}).
\begin{equation} \label{influence}
I(f)=\sum_{i=0}^n W^i.
\end{equation}

In this paper, what we shall actually use later on is an analytic expression that provides a lower bound for the influence:

\begin{cor} \label{cor}

For any $f:\{0,1\}^n \to \{-1,1\}$, and any $k\in [n]$:

$$I(f)\geq k -\sum_{i=0}^{k-1} (k-i)W^i(f)$$.
\end{cor}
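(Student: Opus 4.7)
The plan is to derive the inequality as a purely algebraic consequence of Parseval's identity combined with the Fourier-analytic expression for the total influence. I will take formula \eqref{influence} in its standard form $I(f)=\sum_{i=0}^{n} i\,W^i(f)$ as the starting point. The intuition is that per unit of Fourier weight, levels $0,\dots,k-1$ each contribute less than $k$ to $I(f)$, while every higher level contributes at least $k$; so one may safely inflate the coefficients on the low part up to $k$ at the cost of an explicit additive correction.

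First I would apply Parseval's identity (Lemma \ref{Parseval}) to rewrite the constant $k$ as a weighted sum over the levels,
\begin{equation*}
k \;=\; k\sum_{i=0}^{n} W^i(f) \;=\; \sum_{i=0}^{k-1} k\,W^i(f) \;+\; \sum_{i=k}^{n} k\, W^i(f),
\end{equation*}
and then subtract the correction term appearing in the statement to get
\begin{equation*}
k - \sum_{i=0}^{k-1}(k-i)\,W^i(f) \;=\; \sum_{i=0}^{k-1} i\,W^i(f) \;+\; \sum_{i=k}^{n} k\, W^i(f).
\end{equation*}
Next I would split the influence identity at level $k$ as $I(f)=\sum_{i=0}^{k-1} i\,W^i(f) + \sum_{i=k}^{n} i\,W^i(f)$ and subtract the previous display to obtain
\begin{equation*}
I(f) - \left(k - \sum_{i=0}^{k-1}(k-i)\,W^i(f)\right) \;=\; \sum_{i=k}^{n} (i-k)\,W^i(f) \;\geq\; 0,
\end{equation*}
where non-negativity is immediate since every summand has $i\geq k$ and $W^i(f)\geq 0$. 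This is exactly the stated bound.

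There is no serious obstacle here: once \eqref{influence} is in hand, the whole argument is a two-line algebraic manipulation, with the only genuine input being the monotonicity observation that replacing the factor $i$ by $k$ decreases the sum on levels $\geq k$. The one point requiring care is interpreting \eqref{influence} correctly, since the inequality in the corollary is not even dimensionally consistent unless that identity is read in its standard weighted form $I(f)=\sum_{i} i\,W^i(f)$ (the factor of $i$ having apparently been dropped in the typeset version).
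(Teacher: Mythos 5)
Your proof is correct and is essentially the paper's own argument (split the influence identity $I(f)=\sum_i i\,W^i(f)$ at level $k$, replace $i$ by $k$ on the high levels, and apply Parseval), just run in the opposite direction. You are also right that the displayed form of \eqref{influence} drops the factor of $i$; the paper's proof of the corollary silently uses the correct weighted identity.
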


\begin{proof}
\begin{equation}
\begin{aligned}
I(f)={} & \sum_{i=0}^n iW^i(f) \geq \sum_{i=0}^{k-1} W^i(f) + \sum_{i=k}^n kW^i(f) =
        \\& =k\sum_{i=0}^n W^i(f) -\sum_{i=0}^{k-1} (k-i)W^i(f)=k- \sum_{i=0}^{k-1} (k-i)W^i(f)
\end{aligned}
\end{equation}
where the last equality uses Parseval's identity.
\end{proof}
We end this subsection by stating two important theorems in this subject. The first one is a famous result by Freidgut, Kalai and Naor \cite{FKN}, known in the field as FKN theorem. The theorem states that if a boolean function has almost all of its weight on level-$1$, then this function is close (distance here being as we have defined above) to some dictator or anti-dictator:

\begin{thm} \label{FKN} \textit{(FKN)} \cite{FKN} 
There is some absolute constant $C_1$, so that the following holds. For any boolean function $f:\{0,1\}^n \to \{-1,1\}$, if \mbox{$W^1(f)\geq 1-\delta$}, then there is some $i \in [n]$, for which either $dist(f,\chi_i)<C_1\delta$ or \mbox{$dist(f,-\chi_i)<C_1\delta$}.
\end{thm}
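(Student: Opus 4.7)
The plan is to show that under $W^1(f)\geq 1-\delta$, the level-$1$ Fourier mass of $f$ must concentrate on a single coefficient; closeness of $f$ to a (possibly negated) dictator then follows by a direct Parseval computation. Writing $a_i = \hat{f}(\{i\})$, I would decompose $f = c + L + R$, where $c = \hat{f}(\emptyset)$, $L = \sum_i a_i \chi_{\{i\}}$ is the level-$1$ part, and $R$ collects the Fourier mass on levels $\geq 2$. Parseval gives $c^2 + \|R\|_2^2 = 1 - W^1(f) \leq \delta$.

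The heart of the argument exploits the identity $f^2 \equiv 1$, which forces every Fourier coefficient of $f^2$ above level $0$ to vanish. Expanding $(c+L+R)^2$ and isolating the level-$2$ projection, the contribution of $L^2$ to level $2$ is precisely $2\sum_{i<j} a_i a_j\,\chi_{\{i,j\}}$, with squared $L^2$-norm $4\sum_{i<j} a_i^2 a_j^2$. This mass must be cancelled by the level-$2$ projections of $2cR$, $2LR$, and $R^2$, whose $L^2$-norms I would bound by a constant times $\sqrt{\delta}$: Cauchy--Schwarz gives $\|2cR\|_2 \leq 2|c|\,\|R\|_2 \leq 2\delta$, and for $\|2LR\|_2$ and $\|R^2\|_2 = \|R\|_4^2$ I would invoke the Bonami--Beckner hypercontractive inequality $\|p\|_4 \leq 3^{d/2}\|p\|_2$ for degree-$\leq d$ multilinear polynomials. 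Because $R$ has unbounded degree, I would first split $R = R_{\leq k} + R_{>k}$, handle $R_{\leq k}$ by hypercontractivity and $R_{>k}$ by the trivial bound $\|R_{>k}\|_2^2 \leq \delta$, then optimise $k$. The target estimate is $\sum_{i<j} a_i^2 a_j^2 \leq C'\delta$ for an absolute constant $C'$.

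Combining this bound with $\sum_i a_i^2 \geq 1 - \delta$, the identity $\sum_{i<j} a_i^2 a_j^2 = \tfrac{1}{2}\bigl[(\sum_i a_i^2)^2 - \sum_i a_i^4\bigr]$ together with $\sum_i a_i^4 \leq (\max_i a_i^2)\sum_i a_i^2$ then yields $\max_i a_i^2 \geq 1 - O(\delta)$. Taking $i^*$ attaining the maximum and $\varepsilon = \mathrm{sgn}(a_{i^*})$, Parseval gives $\mathrm{dist}(f,\varepsilon\chi_{\{i^*\}}) = \tfrac{1}{2}(1 - |a_{i^*}|) \leq O(\delta)$, which is the stated conclusion with $C_1$ equal to the implicit absolute constant.

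The main obstacle is controlling the level-$2$ projection of $R^2$: since $R$ has unbounded degree, no single hypercontractive estimate suffices, so the degree-truncation trick requires careful bookkeeping to balance the hypercontractive factor $3^{k/2}$ against the Parseval bound on the tail. An alternative route would be to study the fourth moment $\mathbb{E}[(L^2-1)^2]$ directly through $L^2 - 1 = -(c+R)(2L + c + R)$, but this encounters the same obstacle because $\|L\|_\infty$ and $\|R\|_\infty$ need not be bounded by absolute constants, and it is precisely this step where I would expect most of the technical work to lie.
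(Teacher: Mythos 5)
This is a quoted theorem: the paper does not prove FKN but cites Friedgut--Kalai--Naor, so there is no in-paper proof to compare against; I am judging your sketch against the known argument. Your overall architecture is the right one and matches the standard proof: reduce to showing $\sum_{i<j}a_i^2a_j^2\le C'\delta$, deduce $\max_i a_i^2\ge 1-O(\delta)$ from $\sum_i a_i^2\ge 1-\delta$ via $\sum_{i<j}a_i^2a_j^2=\tfrac12[(\sum_i a_i^2)^2-\sum_i a_i^4]$, and finish with Parseval. The final step and the identification of $\mathbb{E}[(L^2-\mathbb{E}L^2)^2]=4\sum_{i<j}a_i^2a_j^2$ with the level-$2$ mass of $L^2$ are all correct.

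The gap is exactly where you flag "the main obstacle," and your proposed resolution does not close it. Truncating $R=R_{\le k}+R_{>k}$ handles $P_2(R_{\le k}^2)$ (norm at most $3^k\delta$ by forward hypercontractivity), but for $P_2(R_{>k}^2)$ and $P_2(R_{\le k}R_{>k})$ the "trivial bound $\|R_{>k}\|_2^2\le\delta$" gives nothing: the level-$2$ projection of the square of a high-degree function $h$ is not controlled by $\|h\|_2$. For instance, if $h$ is a scaled point mass ($\|h\|_2^2=\delta$, all Fourier coefficients $\pm\sqrt{\delta 2^{-n}}$), every level-$2$ coefficient of $h^2$ equals $\pm\delta$, so $\|P_2(h^2)\|_2\approx n\delta\gg\sqrt{\delta}$. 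No choice of $k$ repairs this, and your "alternative route" via $\mathbb{E}[(L^2-1)^2]$ hits the same wall, as you note. The standard fix is the dual form of hypercontractivity: for any $g$, $\|P_2 g\|_2^2=\langle P_2g,g\rangle\le\|P_2g\|_4\|g\|_{4/3}\le 3\|P_2g\|_2\|g\|_{4/3}$, hence $\|P_2g\|_2\le 3\|g\|_{4/3}$. Applying this to $g=f^2-L^2=(f-L)(f+L)$ and using H\"older, $\|P_2(L^2)\|_2=\|P_2(f^2-L^2)\|_2\le 3\|f-L\|_2\,\|f+L\|_4\le 3\sqrt{\delta}\,(1+\sqrt{3})$, since $\|f\|_4=1$ and $\|L\|_4\le\sqrt{3}\|L\|_2\le\sqrt{3}$. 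This yields $4\sum_{i<j}a_i^2a_j^2\le 9(1+\sqrt{3})^2\delta$ with no need to control $\|R\|_4$ at all, after which the rest of your argument goes through verbatim.
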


Several years after this theorem was discovered, Kindler and Safra \cite{Kindler-Safra} managed to show an analog for higher (constant) weights.
Specifically of interest to us is the level-$2$ concentration case.

\begin{thm} \label{Kindler-Safra}
There is some absolute constant $C_2>0$, so that the following holds. For any $n \in \mathbb{N}$, denote by $\mathcal{A}_n$ the class of all functions either of the form $\pm \chi_{i,j}$ for some distinct $i,j \in [n]$, or of the form \mbox{$\pm \frac{1}{2}(\chi_{i,j}+\chi_{j,k}+\chi_{k,l}-\chi_{i,l})$}, for some distinct $i,j,k,l \in [n]$. Let $f:\{0,1\}^n \to \{-1,1\}$ be a boolean function, so that 
$W^2(f)\geq 1-\delta$. Then $dist(f,\mathcal{A}_n)<C_2\delta$.
\end{thm}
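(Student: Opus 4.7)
The plan is to combine the Parseval bound with a classification of low-degree Boolean functions. First, I would observe that if $W^2(f) \geq 1-\delta$, then by Parseval's identity the degree-$\leq 2$ Fourier truncation $p := \sum_{|S|\leq 2}\hat{f}(S)\chi_S$ satisfies $\|f - p\|_2^2 = W^0(f)+W^1(f)+W^{\geq 3}(f) \leq \delta$. So $f$ is $L^2$-close to a degree-$2$ polynomial, and it remains to show that any such polynomial that is close to a $\pm 1$-valued function must in fact be close to an exact element of $\mathcal{A}_n$.

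Next, I would pin down the exact classification of degree-$\leq 2$, $\{-1,1\}$-valued functions. A direct argument: consider the ``Fourier support graph'' $G = ([n], E)$ with $E = \{\{i,j\} : \hat{f}(\{i,j\}) \neq 0\}$. Expanding $f^2 \equiv 1$ in the character basis and matching coefficients of $\chi_T$ for $|T|=3$ and $|T|=4$ produces algebraic constraints on $G$ and on the coefficients. A careful case analysis forces $G$ to be either a single edge or a 4-cycle, and the nonzero coefficients to take exactly the shapes $\pm \chi_{i,j}$ or $\pm \frac{1}{2}(\chi_{i,j}+\chi_{j,k}+\chi_{k,l}-\chi_{i,l})$. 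This classification of exact degree-$\leq 2$ Boolean functions is where $\mathcal{A}_n$ comes from.

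The main obstacle is the quantitative (stability) version: one needs $dist(f, \mathcal{A}_n) = O(\delta)$, not $O(\sqrt{\delta})$. A natural path is a bootstrap using FKN on restrictions. For any coordinate $i$, write $f_{i=b}$ for the restriction to $x_i = b$; its Fourier coefficients are of the form $\hat{f}(T) \pm \hat{f}(T \cup \{i\})$, so the level-$1$ weight of $f_{i=b}$ is controlled by the level-$1$ and level-$2$ weights of $f$ that involve $i$. Although the averaged level-$1$ weight of the restrictions is only $O(1/n)$, one can identify the ``heavy'' coordinates (those $j$ with $\sum_k \hat{f}(\{j,k\})^2$ large) and perform restrictions on the complement, whereupon the restricted function does have concentrated level-$1$ weight and FKN applies. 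Stitching together the local dictator/anti-dictator structure identified by FKN across different restrictions determines the global shape of $f$ as one of the two families in $\mathcal{A}_n$. Keeping the error bound linear in $\delta$ (rather than letting square roots accumulate through hypercontractive $L^4$-to-$L^2$ moves) is the delicate part, and I expect this to be the main obstacle; it is the technical heart of the Kindler--Safra argument and the reason the proof is substantially harder than the FKN case.
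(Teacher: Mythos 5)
This statement is not proved in the paper at all: it is Theorem~\ref{Kindler-Safra}, quoted as a black box from Kindler and Safra \cite{Kindler-Safra}, and the paper's only ``proof'' is the citation. So there is nothing internal to compare your argument against; the relevant question is whether your sketch would actually establish the theorem, and as written it would not. Your first step (Parseval gives $\|f-p\|_2^2\le\delta$ for the degree-$\le 2$ truncation $p$) is fine, and your identification of the two hard steps is accurate, but both of those steps are left as declared intentions rather than arguments. The exact classification --- that a Boolean function with all Fourier weight on level $2$ must be $\pm\chi_{i,j}$ or one of the $4$-cycle functions --- is asserted via ``a careful case analysis forces\dots'' without carrying out the coefficient-matching; note also that your reduction quietly needs more than ``degree-$2$ Boolean functions close to $p$'': functions such as $\frac{1}{2}(1+\chi_i+\chi_j-\chi_{i,j})$ are exactly Boolean and of degree $2$ but lie outside $\mathcal{A}_n$, and ruling them out requires using that $p$ has almost no weight on levels $0$ and $1$, not merely that $p$ is near-Boolean.

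The more serious gap is the stability step. Getting $dist(f,\mathcal{A}_n)=O(\delta)$ rather than $O(\sqrt{\delta})$ is, as you say yourself, ``the technical heart of the Kindler--Safra argument,'' and your proposal for it --- restrict away from the heavy coordinates, apply FKN to the restrictions, and ``stitch together'' the local dictators --- is a plausible strategy outline but contains no estimate. In particular, it is not shown that the restricted functions have level-$1$ weight $1-O(\delta)$ (a typical restriction only inherits the deficiency on average, and the heavy-coordinate bookkeeping is exactly where hypercontractivity enters in the actual proof), nor that the dictators produced by FKN on different restrictions are consistent with a single global element of $\mathcal{A}_n$ up to error $O(\delta)$. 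Since the entire content of the theorem is this quantitative control, the attempt is a proof plan with the load-bearing steps missing. For the purposes of this paper the correct move is the one the author makes: cite the result rather than reprove it.
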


Notice that, in both theorems, all functions which are used to approximate $f$ are \textit{balanced function}. That is, They have value $-1$ on exactly half of the points in $\{0,1\}^n$, or equivalently, their zero-level fourier coefficient is $0$.

\subsection{Union-closed and Simply-rooted Families}
\begin{rem}
For a set $A$ and an element $i \in A$, we denote by $[i,A]$ the family
of all sets containing element $i$ and contained in $A$. We often, by abuse of notation, write $i$ when we in fact mean the singleton $\{i\}$. The meaning should be clear from context.
\end{rem}

 Recall that a family $\mathcal{G} \in 2^{[n]}$ is called \textit{union-closed}, if for every two sets \mbox{$A,B \in \mathcal{G}$} $A \cup B \in \mathcal{G}$.
A family  $\mathcal{F} \subseteq 2^{[n]}$ is called \textit{simply-rooted}, if for every $A \in \mathcal{F}$, there
 is some $i \in A$, so that $[i,A] \subseteq \mathcal{F}$. We say in this case that $A$ is rooted in $i$.

\begin{lemma} \label{dual}
 $\mathcal{G}$ is union-closed iff $\mathcal{F}:=2^{[n]} \setminus \mathcal{G}$ is simply-rooted. 
\end{lemma}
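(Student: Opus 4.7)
The plan is to prove both directions by contradiction, exploiting the elementary identity $A=\bigcup_{i\in A}B_i$ whenever each $B_i$ is a subset of $A$ containing $i$. This identity lets us reconstruct any nonempty set from well-chosen pieces, which dovetails exactly with the meaning of $[i,A]$.

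For the forward direction, I would assume $\mathcal{G}$ is union-closed and pick an arbitrary $A\in\mathcal{F}$. If $A$ were not rooted at any $i$, then for every $i\in A$ there would be some $B_i\in[i,A]\cap\mathcal{G}$. Since $i\in B_i\subseteq A$ for every $i\in A$, the union $\bigcup_{i\in A}B_i$ equals $A$; by union-closure of $\mathcal{G}$, this forces $A\in\mathcal{G}$, contradicting $A\in\mathcal{F}$.

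For the reverse direction, I would assume $\mathcal{F}$ is simply-rooted and take $A,B\in\mathcal{G}$. If $A\cup B\in\mathcal{F}$, then simply-rootedness yields some $i\in A\cup B$ with $[i,A\cup B]\subseteq\mathcal{F}$. Without loss of generality $i\in A$, and then $A\in[i,A\cup B]\subseteq\mathcal{F}$, contradicting $A\in\mathcal{G}$. Hence $A\cup B\in\mathcal{G}$, establishing union-closure.

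The only genuine subtlety, not really an obstacle, is the set $A=\emptyset$: the simply-rooted condition cannot be verified for $\emptyset$ (there is no $i\in\emptyset$), so $\mathcal{F}$ being simply-rooted implicitly requires $\emptyset\notin\mathcal{F}$, equivalently $\emptyset\in\mathcal{G}$. On the union-closed side this corresponds to treating the empty union as a member of $\mathcal{G}$, which is the natural convention here; I would mention this at the start of the argument and then dispense with it.
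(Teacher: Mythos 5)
Your proof is correct and follows essentially the same route as the paper's: the forward direction reconstructs $A$ as $\bigcup_{i\in A}B_i$ with each $B_i\in[i,A]\cap\mathcal{G}$ and invokes union-closure, and the reverse direction places $A$ inside $[i,A\cup B]$ for a root $i$ of $A\cup B$. Your closing remark about $\emptyset$ is a legitimate edge case that the paper silently glosses over (its forward argument applies the union-closure of $\mathcal{G}$ to an empty union when $A=\emptyset$), and flagging the convention as you do is the right way to handle it.
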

\begin{proof}
Assume $\mathcal{G}$ is union-closed, and assume by contradiction $\mathcal{F}$ is not simply-rooted. That is, there exists some $A \in \mathcal{F}$ with the following property. For every $i \in A$ there is some set $A_i\in \mathcal{G}$, where $i \in A_i \subseteq A$. Notice that \mbox{$\bigcup_{i \in A}A_i =A$} by definition, but since $\mathcal{G}$ is union-closed, also $\bigcup_{i \in A}A_i \in \mathcal{G}$. This means that $A \in \mathcal{G}$, which is a contradiction.\\
\vspace{3mm}
For the other direction, assume that $\mathcal{F}$ is simply-rooted, and assume by contradiction $\mathcal{G}$ is not union-closed. So there are two sets $A,B \in \mathcal{G}$, with \mbox{$A \cup B \in \mathcal{F}$}. This means that there is some \mbox{$i \in A \cup B$} with \mbox{$[i,A\cup B] \subseteq \mathcal{F}$}, so $i$ is in $A$ or in $B$. Assume without loss of generality that \mbox{$i \in A$}. Then \mbox{$A \in [i, A\cup B]\subseteq \mathcal{F}$}. But this is a contradiction, since we took $A \in \mathcal{G}$.
\end{proof}

We have defined in the introduction the upper shadow of a set. We define, in similar manner, the \textit{lower shadow}. Let $A \subseteq [n]$. Then the \textit{lower shadow of A}, denoted by $\partial^-A$, is defined as follows:

$$\partial^- A=\{A\setminus i|i \in A\}.$$

For a family of sets $\mathcal{F} \subseteq 2^{[n]}$, the lower shadow of $\mathcal{F}$ is simply the union of the lower shadow of all sets inside $\mathcal{F}$:

$$\partial^-\mathcal{F}=\bigcup_{A \in \mathcal{F}}\partial^-A.$$

With this definition at hand, we prove:
\begin{lemma} \label{shadow}
Let $\mathcal{F}$ be a simply-rooted family. Then for every $A \in \mathcal{F}$, 

\[\partial^-A \setminus \mathcal{F}= \left\{\begin{array}{lr}
       \{A\setminus {i}\}, & \text{if $A$ is rooted in $i$ and in no other element.}\\ 
  \emptyset,  \text{otherwise.}
        \end{array}\right\}
\]
\end{lemma}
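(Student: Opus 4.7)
The plan is to start from any root $i$ of $A$ and use the definition of simply-rooted to classify which sets in $\partial^- A$ can possibly fail to lie in $\mathcal{F}$. Suppose $A$ is rooted in $i$, so $[i,A] \subseteq \mathcal{F}$. For any $j \in A$ with $j \neq i$, the set $A \setminus \{j\}$ still contains $i$ and is contained in $A$, hence $A \setminus \{j\} \in [i,A] \subseteq \mathcal{F}$. Therefore the only candidates for sets in $\partial^- A \setminus \mathcal{F}$ are of the form $A \setminus \{i\}$ where $i$ is some root of $A$. This immediately reduces the lemma to understanding the interaction between the roots of $A$ and whether $A \setminus \{i\}$ lies in $\mathcal{F}$.

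Next I would dispose of the case where $A$ has at least two distinct roots $i \neq i'$. By the preceding observation applied with root $i'$, the set $A \setminus \{i\}$ lies in $[i', A] \subseteq \mathcal{F}$; symmetrically for $A \setminus \{i'\}$. Since every other $A \setminus \{j\}$ was already in $\mathcal{F}$, this gives $\partial^- A \setminus \mathcal{F} = \emptyset$, which is the ``otherwise'' branch of the statement.

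It remains to handle the delicate case where $i$ is the unique root of $A$. Here I need to show $A \setminus \{i\} \notin \mathcal{F}$, and I would do this by contradiction: suppose $A \setminus \{i\} \in \mathcal{F}$. Since $\mathcal{F}$ is simply-rooted, there is some $i' \in A \setminus \{i\}$ with $[i', A \setminus \{i\}] \subseteq \mathcal{F}$. I claim $i'$ is also a root of $A$, which contradicts uniqueness. The verification is a simple dichotomy: any $B$ with $i' \in B \subseteq A$ either contains $i$, in which case $B \in [i, A] \subseteq \mathcal{F}$, or does not contain $i$, in which case $B \subseteq A \setminus \{i\}$ and so $B \in [i', A \setminus \{i\}] \subseteq \mathcal{F}$. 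Either way $B \in \mathcal{F}$, hence $[i', A] \subseteq \mathcal{F}$, making $i'$ a second root of $A$.

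The main obstacle, such as it is, is the last step: one needs to recognize that simply-rootedness applied to $A \setminus \{i\}$ produces a second root of $A$ itself via the dichotomy argument above. Everything else is essentially unpacking the definitions of roots and lower shadow.
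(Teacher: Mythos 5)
Your proof is correct. The case analysis is the same as the paper's: fixing a root $i$ of $A$, every $A\setminus\{j\}$ with $j\neq i$ lies in $[i,A]\subseteq\mathcal{F}$, and if a second root exists the whole lower shadow is swallowed. The one step that requires an idea --- showing $A\setminus\{i\}\notin\mathcal{F}$ when $i$ is the \emph{unique} root --- is where you and the paper diverge. You argue by contradiction inside $\mathcal{F}$ itself: if $A\setminus\{i\}\in\mathcal{F}$, simple-rootedness hands you a root $i'$ of $A\setminus\{i\}$, and the dichotomy ($i\in B$ versus $i\notin B$) shows $[i',A]\subseteq\mathcal{F}$, manufacturing a forbidden second root of $A$. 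The paper instead works in the complementary union-closed family $\mathcal{G}=2^{[n]}\setminus\mathcal{F}$: for each $j\in A\setminus\{i\}$, non-rootedness at $j$ produces $A_j\in\mathcal{G}$ with $j\in A_j\subseteq A\setminus\{i\}$ (it cannot contain $i$ because $[i,A]\subseteq\mathcal{F}$), and the union of these witnesses is exactly $A\setminus\{i\}$, which lies in $\mathcal{G}$ by union-closure. The two arguments are essentially contrapositive/dual to one another; yours has the mild advantage of staying entirely within the simply-rooted formulation and not leaning on Lemma~\ref{dual}, whereas the paper's is the ``constructive'' version that exhibits $A\setminus\{i\}$ explicitly as a union of members of $\mathcal{G}$. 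There is no gap in your argument (even the degenerate case $A=\{i\}$ is handled, since $\emptyset$ can never belong to a simply-rooted family).
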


\begin{proof}

Let $A \in \mathcal{F}$, with $A$ rooted in element $i$. Then $[i,A] \subseteq \mathcal{F}$, which means that for every $j\in A$ with $j \neq i$, $A \setminus j \in [i,A]$, and thus $A\setminus j \in \mathcal{F}$.
If $A$ is rooted in another element $j$, then by the same token $A \setminus i \in [j,A] \subseteq  \mathcal{F}$. So in this case $\partial^-A \setminus \mathcal{F}=\emptyset$.\\ \vspace{3mm}
If, on the other hand, $A$ is rooted only in element $i$, then for every element $j \in A \setminus i$, there is some set $A_j\in \mathcal{G}$, satisfying \mbox{$j \in A_j \subseteq A\setminus i$}. Taking the union of them, and recalling that $\mathcal{G}$ is union-closed, we obtain \mbox{$A \setminus i = \bigcup_{j \in A \setminus i}A_j \in \mathcal{G}$}, proving the lemma. 
\end{proof}

\section{Frankl's conjecture for large families}
Due to Lemma \ref{dual}, Theorem \ref{abundant} can be stated in terms of simply-rooted families, rather than union-closed sets. The key observation here, is that every element $i \in [n]$ appears in exactly half the sets in $2^{[n]}$. Thus, an element $i \in [n]$ is abundant in family $\mathcal{F}$ iff it is rare in $2^{[n]} \setminus \mathcal{F}$:

\begin{thm*} \textbf{\ref{abundant}*}
Let $\mathcal{F} \subseteq 2^{[n]}$ be simply-rooted, $|\mathcal{F}| \leq 2^{n-1}$. Then there is some element $i \in [n]$ such that $i$ is rare in $\mathcal{F}$.
\end{thm*}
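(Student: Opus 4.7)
The plan is to encode $\mathcal{F}$ by the Boolean function $f : \{0,1\}^n \to \{-1, 1\}$ with $f(x^S) = -1$ iff $S \in \mathcal{F}$, and argue by contradiction. By Observation \ref{levels} one has $\hat{f}(\emptyset) = 1 - |\mathcal{F}|/2^{n-1} =: \alpha \in [0, 1]$ (so $W^0(f) = \alpha^2$), and a direct computation gives $\hat{f}(i) = (2|\mathcal{F}_i| - |\mathcal{F}|)/2^{n-1}$, so $i$ is rare in $\mathcal{F}$ precisely when $\hat{f}(i) \le 0$. I will suppose each $\hat{f}(i) > 0$ and derive a contradiction by combining Corollary \ref{cor}, Lemma \ref{shadow}, and the elementary bound $\hat{f}(i)^2 \le \hat{f}(i)$ valid for $\hat{f}(i) \in (0, 1]$.

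The simply-rooted hypothesis is used exactly once: Lemma \ref{shadow} says each $A \in \mathcal{F}$ contributes at most one pair $(A\setminus\{i\}, A)$ with $A\setminus\{i\} \notin \mathcal{F}$ (via $A$'s unique root, if any), which translates to $I^-(f)\cdot 2^{n-1} \le |\mathcal{F}|$, i.e.\ $I^-(f) \le 1 - \alpha$. At the same time, summing $\hat{f}(i)^2 \le \hat{f}(i)$ over $i$ and invoking the identity $\sum_i \hat{f}(i) = I^-(f) - I^+(f)$ from Observation \ref{levels} yields the key ``linearization'' $W^1(f) \le I^-(f) - I^+(f)$.

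Applying Corollary \ref{cor} with $k = 2$ gives $I^+(f) + I^-(f) \ge 2 - 2\alpha^2 - W^1(f)$. Substituting the linearization causes the $I^+(f)$ terms to cancel, leaving $I^-(f) \ge 1 - \alpha^2$. Combined with the simply-rooted bound $I^-(f) \le 1 - \alpha$, this forces $\alpha \le \alpha^2$, and since $\alpha \in [0, 1]$ we must have $\alpha \in \{0, 1\}$. The case $\alpha = 1$ corresponds to the vacuous $\mathcal{F} = \emptyset$ and carries no content.

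The main obstacle I expect is the tight balanced case $\alpha = 0$ (that is, $|\mathcal{F}| = 2^{n-1}$ exactly), where every inequality in the above chain collapses to an equality and no contradiction is obtained directly. My plan to close this case is to trace equality backwards: the bound $\hat{f}(i)^2 \le \hat{f}(i)$ is tight only when $\hat{f}(i) \in \{0, 1\}$, so under the contradiction hypothesis $\hat{f}(i) = 1$ for every $i \in [n]$, and Parseval's identity then forces $n = W^1(f) \le 1$. Hence $n = 1$, which is the edge case $\mathcal{F} = \{\{1\}\}$ corresponding to the trivial dual union-closed family $\{\emptyset\}$ and can safely be excluded by a non-triviality assumption.
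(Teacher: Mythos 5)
Your proposal is correct and follows essentially the same route as the paper's proof: encode $\mathcal{F}$ as a $\pm 1$-valued function, use Lemma \ref{shadow} to bound by $|\mathcal{F}|/2^{n-1}$ the influence in the ``rooted'' direction, linearize $W^1(f)$ via $\hat{f}(i)^2\le\hat{f}(i)$, and play this off against Corollary \ref{cor} with $k=2$. The only deviations are cosmetic: your labelling of $I^{+}$ versus $I^{-}$ follows the paper's printed definitions (which are the mirror image of the convention its proof actually uses, so your identity $\sum_i\hat{f}(i)=I^-(f)-I^+(f)$ is the consistent reading), and your non-strict inequalities plus the equality analysis forcing $f=\chi_i$ and $n=1$ in the tight case $|\mathcal{F}|=2^{n-1}$ is in fact slightly more careful than the paper's blanket claim that every $\hat{f}(i)$ lies strictly between $0$ and $1$, which silently excludes the genuinely degenerate family $\mathcal{F}=\{\{1\}\}$ (dual to $\mathcal{G}=\{\emptyset\}$).
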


\begin{proof}
Let $\mathcal{F} \subseteq 2^{[n]}$ be a simply-rooted family of size $|\mathcal{F}|=(\frac{1}{2}-\delta)2^n$, where $\delta \geq 0$. Assume by contradiction that $|\mathcal{F}_i|>\frac{1}{2}|\mathcal{F}|$ for every $i \in [n]$. Identify $\mathcal{F}$ with a function $f:\{0,1\}^n \to \{-1,1\}$ as in section $2.1$. 
Our goal is to find both a lower and an upper bound for the positive influence $I^+(f)$, and then show that the lower bound is in fact larger than the upper bound. This is of course impossible.

For the upper bound, observe that, by Lemma \ref{shadow}, for any $x \in \{0,1\}^n$ such that $f(x)=-1$, there is at most one $i \in [n]$ for which $x_i=1$ and \mbox{$f(x\oplus e_i)=1$}. Thus, each such $x$ contributes at most $2^{1-n}$ to the positive influence. Since \mbox{$|f^{-1}(-1)|=(\frac{1}{2}-\delta)2^n$}, we  deduce the upper bound: 

\begin{equation} \label{upper-bound}
I^+(f)\leq 1-2\delta.
\end{equation}

What about the lower bound?

By Lemma \ref{levels}, we have:
\begin{equation} \label{level-0}
\hat{f}(\emptyset)=1-2(\frac{1}{2}-\delta)=2\delta.
\end{equation}

Furthermore, for any $i \in [n]$:

\begin{equation} \label{level-1}
\hat{f}(i)=I_i^+(f)-I_i^-(f)>0.
\end{equation}

Indeed, (\ref{level-1}) is equivalent to our assumption that $|\mathcal{F}_i|>\frac{1}{2}|\mathcal{F}|$ for every $i \in [n]$.

Because all level $1$ fourier coefficients are strictly between $0$ and $1$, then \mbox{$\hat{f}(i)>\hat{f}(i)^2$} for all $i \in [n]$.
 Summing over all $i$'s, we obtain:

\begin{equation*}I^+(f)-I^-(f)=\sum_{i=1}^n \hat{f}(i) > \sum_{i=1}^n \hat{f}(i)^2.
\end{equation*}

Plugging the latter inequality to the one stated in Corollary \ref{cor}, for $k=2$, gives:

$$I^+(f)+I^-(f)\geq 2- \sum_{i=1}^n \hat{f}(i)^2 -2\hat{f}(\emptyset)^2>2-(I^+(f)-I^-(f))-8\delta^2,$$

or 

\begin{equation} \label{lower-bound}
I^+(f)>1-4\delta^2
\end{equation}

Finally, by combining (\ref{upper-bound}) and (\ref{lower-bound}), we see that

$$\delta>\frac{1}{2},$$

but this is absurd, since \mbox{$|\mathcal{F}|=(\frac{1}{2}-\delta)2^n$} can not be a negative number, and the theorem is proved.
\end{proof}

\begin{rem}
Notice that the above theorem also holds if instead of demanding that $\mathcal{F}$ is simply-rooted, we make the weaker demand that every set in $\mathcal{F}$ covers at most one set that is not in $\mathcal{F}$. Indeed, this is the only property of simply-rooted families used in the proof.
\end{rem}

\section{Upper Shadow of Union-closed families}

Let $\mathcal{G} \subseteq 2^{[n]}$ be a union-closed family, and let $\mathcal{F}:= 2^{[n]} \setminus \mathcal{G}$. By Lemma \ref{dual}, $\mathcal{F}$ is a simply-rooted family. By definiton of the upper-shadow, for any set $A$, $A \in \partial^+\mathcal{G} \setminus \mathcal{G}$ iff $A \in \mathcal{F}$, and there exists some $i \in A$ such that $A \setminus i \in \mathcal{G}$. However, from Lemma \ref{shadow}, this happens exactly when $A \in \mathcal{F}$ and is rooted in exactly one element. So Theorem \ref{upper-shadow}
can be equivalently stated like so:

\begin{thm*} \textit{\ref{upper-shadow}*}
Let $\mathcal{F}$ be a simply-rooted family of subsets of $[n]$. Then there are at most $2^{n-1}$ sets in $\mathcal{F}$ that are rooted in only one element.
\end{thm*}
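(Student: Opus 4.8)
The plan is to set up a counting argument by charging each "bad" set (a set in $\mathcal{F}$ rooted in exactly one element) to a set outside $\mathcal{F}$ via its lower shadow, and show this charging is close enough to injective to give the factor of $1/2$. By Lemma~\ref{shadow}, if $A \in \mathcal{F}$ is rooted in exactly one element $i$, then $\partial^- A \setminus \mathcal{F} = \{A \setminus i\}$; in particular every such $A$ "points down" to a unique set $A \setminus i \in \mathcal{G} = 2^{[n]} \setminus \mathcal{F}$. So I would define a map $\varphi$ from $\mathcal{B} := \{A \in \mathcal{F} : A \text{ rooted in exactly one element}\}$ to $\mathcal{G}$ by $\varphi(A) = A \setminus i$, where $i = i(A)$ is the unique root of $A$. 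The first thing to check is that $\varphi$ is well-defined: Lemma~\ref{shadow} guarantees $A \setminus i \notin \mathcal{F}$, hence $A \setminus i \in \mathcal{G}$.

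The heart of the matter is to bound the fibers of $\varphi$. Fix $C \in \mathcal{G}$ and consider $\varphi^{-1}(C)$, i.e. all $A \in \mathcal{B}$ with $A = C \cup \{i(A)\}$ and $i(A) \notin C$. A priori $C$ could be hit by up to $n - |C|$ sets of the form $C \cup \{j\}$. I would argue that in fact it can be hit at most once. Suppose $A = C \cup \{j\}$ and $A' = C \cup \{k\}$ are both in $\mathcal{B}$ with $j \neq k$, both having $C$ as their image, so $A$ is rooted uniquely in $j$ and $A'$ uniquely in $k$. The key idea is to bring in the set $A \cup A' = C \cup \{j,k\}$ (or to reason about $A''=C\cup\{j\}$ versus the structure forced by $[j,A]\subseteq\mathcal{F}$). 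Since $A$ is rooted in $j$, we have $[j,A] \subseteq \mathcal{F}$, so every subset of $A = C\cup\{j\}$ containing $j$ lies in $\mathcal{F}$. Now I want to derive a contradiction with $A'$ being rooted uniquely in $k$ — intuitively, the downward closure toward $j$ inside $A$ should force $A'$ to also be "rooted" in some element other than $k$, or force $C \in \mathcal{F}$. Concretely: $C \cup \{j\} = A \in \mathcal{F}$, and I expect to show that this, together with simply-rootedness applied to $A' = C\cup\{k\}$, yields that $A'$ is rooted in $j$ as well (since $A' \setminus j = (C\setminus j)\cup\{k\}$ would need to be reachable in $\mathcal{G}$), contradicting uniqueness. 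The precise deduction will use, as in Lemma~\ref{shadow}, the union-closedness of $\mathcal{G}$: if $A'$ were rooted only in $k$, then $(C\cup\{k\})\setminus k = C \in \mathcal{G}$ would be obtained as a union of sets of $\mathcal{G}$ lying below it, but we must reconcile this with $A = C\cup\{j\}$ sitting in $\mathcal{F}$ and its root structure. Working out exactly which configuration is forbidden is the step I expect to be the main obstacle — I need the right auxiliary set and the right application of Lemmas~\ref{dual} and~\ref{shadow} to pin the fiber size down to $1$.

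Granting that each fiber has size at most $1$, $\varphi$ is injective, so $|\mathcal{B}| \leq |\mathcal{G}|$. That alone only gives $|\mathcal{B}| \leq 2^n$, not $2^{n-1}$, so one more observation is needed. I would note that $\varphi(A) = A \setminus i(A)$ has the property that $\varphi(A) \notin \mathcal{B}$ might fail in general, but more usefully: the image $\varphi(\mathcal{B}) \subseteq \mathcal{G}$, and I want to show it misses at least half of $\mathcal{G}$, or equivalently exhibit a fixed-point-free involution or a second disjoint injection. The cleanest route: pair up the image. Since every $C \in \varphi(\mathcal{B})$ satisfies $C = A \setminus i$ with $A = C \cup \{i\} \in \mathcal{F}$, the set $C$ is one for which adding some coordinate lands in $\mathcal{F}$; but $C \in \mathcal{G}$. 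Consider the involution $\sigma$ on $2^{[n]}$ flipping a fixed coordinate, say coordinate $1$ — this does not obviously help. Instead I would directly count using Lemma~\ref{shadow} the way the proof of Theorem~\ref{abundant}* did: each $A \in \mathcal{B}$ covers exactly one set outside $\mathcal{F}$, and each set $C$ outside $\mathcal{F}$ is covered (from below, i.e. is $C = A\setminus i$ for $A \supsetneq C$) by at most one $A \in \mathcal{B}$ — that's injectivity again — and additionally no set $C$ with $|C| = n$ is hit and, crucially, the map $A \mapsto A \setminus i(A)$ strictly decreases size, so $\mathcal{B}$ and $\varphi(\mathcal{B})$ are disjoint subsets of $2^{[n]}$ only if no $A\in\mathcal{B}$ equals some $\varphi(A')$; since union-closedness of $\mathcal{G}$ makes $\mathcal{G}$ closed under the relevant downward moves, I expect $\varphi(\mathcal{B}) \cap \mathcal{B} = \emptyset$ (as $\varphi(\mathcal{B}) \subseteq \mathcal{G}$ and $\mathcal{B} \subseteq \mathcal{F}$, these are automatically disjoint!). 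Therefore $\mathcal{B}$ and $\varphi(\mathcal{B})$ are disjoint sets of equal cardinality both sitting inside $2^{[n]}$, whence $2|\mathcal{B}| = |\mathcal{B}| + |\varphi(\mathcal{B})| \leq 2^n$, i.e. $|\mathcal{B}| \leq 2^{n-1}$, which is exactly Theorem~\ref{upper-shadow}*. The tightness example $\mathcal{F} = \{A : 1 \notin A\}$ (whose complement $\mathcal{G} = \{A : 1 \in A\}$ is union-closed) shows every one of these $2^{n-1}$ sets is rooted in exactly one element, confirming the bound cannot be improved.
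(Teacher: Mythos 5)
Your strategy hinges on the claim that the map $\varphi(A)=A\setminus i(A)$ from $\mathcal{B}$ (the sets rooted in exactly one element) into $\mathcal{G}=2^{[n]}\setminus\mathcal{F}$ is injective, and you correctly flag this as the step you have not carried out. Unfortunately it is not merely an obstacle: the claim is false. Take $\mathcal{G}=\{\emptyset\}$, so that $\mathcal{F}=2^{[n]}\setminus\{\emptyset\}$ is simply-rooted. Every nonempty $A$ is rooted in every one of its elements (since $[i,A]$ never contains $\emptyset$), so $\mathcal{B}$ consists exactly of the $n$ singletons $\{1\},\dots,\{n\}$, and $\varphi(\{i\})=\emptyset$ for every $i$: the fiber over $\emptyset$ has size $n$. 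Worse, for $n\ge 2$ we have $|\mathcal{B}|=n>1=|\mathcal{G}|$, so no injection of $\mathcal{B}$ into $\mathcal{G}$ can exist at all; the whole scheme of charging each bad set to the unique set it covers outside $\mathcal{F}$ is doomed, not just your particular attempt to verify injectivity. Consequently your final count, $|\mathcal{B}|+|\varphi(\mathcal{B})|\le 2^n$ together with $|\varphi(\mathcal{B})|=|\mathcal{B}|$, collapses. A smaller symptom of the same issue appears in your closing remark: for $\{A:1\notin A\}$ viewed as the simply-rooted family, every set of size at least two is rooted in all of its elements, so it is not an equality case of the reformulated statement; the tight simply-rooted family is $\{A:1\in A\}$, in which each set is rooted only in the element $1$.

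The paper's proof takes a genuinely different, global route. It partitions $\mathcal{B}$ into classes $\mathcal{F}'_i$ according to the unique root $i$, shows via union-closedness of $\mathcal{G}$ that no Hamming-cube edge joins sets with different unique roots, so every connected component of $Q_n[\mathcal{B}]$ lies inside a single $\mathcal{F}'_i$ and hence has no edge in direction $i$; it then invokes Kotlov's theorem (Theorem~\ref{Kotlov}) that any vertex subset of the cube of size exceeding $2^{n-1}$ induces a component with edges in all $n$ directions. If you want to complete a proof along your lines, you need some structural input of this global kind about large subsets of $Q_n$; the example above shows that purely local, one-step-down charging cannot yield the factor $2^{n-1}$.
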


\begin{proof}
 We identify $2^{[n]}$ with the set of vertices of the $n$-dimensional Hamming cube, $V(Q_n)$ in the obvious manner. We do this, since we shall use the following theorem of Kotlov from $2000$:

\begin{thm} (Kotlov, $2000$) \label{Kotlov} \cite{Kotlov}
Denote by $Q_n$ the $n$-dimensional Hamming cube, and let $V$ be a subset of the vertices, so that $|V|>2^{n-1}$. Then in the induced subgraph $Q_n[V]$, there is a connected component $G$ containing edges in all $n$ directions.
\end{thm}

Let, then, \mbox{$\mathcal{F}\subseteq 2^{[n]}$} be a simply-rooted family, and let \mbox{$\mathcal{F'} \subseteq \mathcal{F}$} be the family of all sets in $\mathcal{F}$ rooted in exactly one element. Partition $\mathcal{F}'$ to families  $\mathcal{F}'_i := \{A \in \mathcal{F'}| A \text{ is rooted in } i\}$, for $1 \leq i \leq n$. We claim that for $A \in \mathcal{F}'_i$, $B \in \mathcal{F}'_j$ with distinct $i$ and $j$, there can not be an edge (in the hamming cube) between $A$ and $B$. Indeed, assume there is such an edge. That is, $B = A \setminus k$ for some $k \in A$. But notice that by our choice of $A$ and $B$, $A \setminus i, B \setminus j \in \mathcal{G}$. That is, $B \setminus \{i,k\}, B \setminus j \in \mathcal{G}$. Hence, $B=(B \setminus \{i,k\}) \cup B \setminus j \in \mathcal{G}$, a contradiction.\\
\vspace{2.5mm}
Consequentially, every connected component in $Q_n[\mathcal{F}']$ lies entirely, for some $i \in [n]$, in $Q_n[\mathcal{F}'_i]$. Finally, assume by contradiction that $|\mathcal{F}'$ $|>2^{n-1}$. By Theorem \ref{Kotlov}, this means that $Q_n[\mathcal{F}']$ has a connected component with edges in all $n$ directions, and by the previous paragraph, this connected component lies entirely in $Q_n[\mathcal{F}'_i]$ for some $i \in [n]$. But this is impossible, since  $Q_n[\mathcal{F}'_i]$ can not contain edges in the $i$th-direction. Indeed, for every $A \in \mathcal{F}'_i$, $i \in A$.
Thus, $|\mathcal{F}'|\leq 2^{n-1}$, proving the theorem.
\end{proof}

\section{Frankl's conjecture for large families - an improvement}
In this section, we relax slightly the lower-bound on the size of $\mathcal{F}$ in Theorem \ref{abundant}. We formulate the the problem in terms of simply-rooted families, as we have done in section $3$. Formulated thus, Theorem \ref{abundant-2} Says the following:

\begin{thm*} \textbf{\ref{abundant-2}*}
There is some absolute constant $c>0$, such that if  $\mathcal{F} \subseteq 2^{[n]}$ is simply-rooted, and $|\mathcal{F}| \leq (\frac{1}{2}+c)2^n$,then there is some element $i \in [n]$ such that $i$ is rare in $\mathcal{F}$.
\end{thm*}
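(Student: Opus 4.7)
The plan is to mirror the proof of Theorem \ref{abundant}*, now using Theorem \ref{upper-shadow}* as the upper bound on the positive influence and invoking the Boolean-analytic machinery of FKN and Kindler--Safra to close the gap left by the simpler influence inequality. As before I identify $\mathcal{F}$ with $f:\{0,1\}^n\to\{-1,1\}$ via $f(x^S)=-1\iff S\in\mathcal{F}$, write $|\mathcal{F}|=(\tfrac{1}{2}+\delta)2^n$ so that $W^0(f)=4\delta^2$, and assume for contradiction that no element is rare, which gives $\hat{f}(i)>0$ for every $i$.

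For the upper bound on the positive influence, Theorem \ref{upper-shadow}* translates (via Lemma \ref{shadow}) to $I^+(f)\leq 1$, independent of $\delta$. Repeating the argument of Section 3 gives the lower bound $I^+(f)>1-4\delta^2$, which is compatible with this upper bound; the contradiction has to come from a more refined analysis. Keeping the contribution of the levels $\geq 3$ inside Corollary \ref{cor} sharpens the lower bound to $I^+(f)>1-4\delta^2+\tfrac{1}{2}\sum_{j\geq 3}W^j(f)$, whence $\sum_{j\geq 3}W^j(f)<8\delta^2$. The additional bound $\hat{f}(i)\leq 1-2\delta$, which is immediate from $|\mathcal{F}_i|\leq 2^{n-1}$, gives $W^1(f)\leq(1-2\delta)(I^+(f)-I^-(f))$; plugging this back into the influence inequality yields $W^1(f)=O(\delta)$. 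Together these estimates force $W^2(f)\geq 1-O(\delta)$.

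At this point Theorem \ref{Kindler-Safra} gives some $g\in\mathcal{A}_n$ with $dist(f,g)<C_2\cdot O(\delta)$. Every such $g$ is balanced, and its associated family $\mathcal{F}_g=\{S:g(x^S)=-1\}$ has size exactly $2^{n-1}$, with each element appearing in exactly $2^{n-2}$ of its sets. The no-rare-element assumption on $\mathcal{F}$ forces $|\mathcal{F}_k|>|\mathcal{F}|/2=2^{n-2}+\delta\cdot 2^{n-1}$ for every $k$, so passing from $\mathcal{F}_g$ to $\mathcal{F}$ must introduce a surplus of at least $\delta\cdot 2^{n-1}$ sets containing each element $k$ simultaneously. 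A case analysis over the short list of possible $g$ in $\mathcal{A}_n$ (the characters $\pm\chi_{\{i,j\}}$ and the four-variable combinations), combined with simple-rootedness of $\mathcal{F}$, shows that this rigid global shift is incompatible with $dist(f,g)=O(\delta)$ once $c$ is chosen small enough. The parallel case in which $W^1(f)$ is close to $1$ rather than $W^2(f)$ is handled analogously via Theorem \ref{FKN} with approximators $\pm\chi_{\{i\}}$.

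The main obstacle is calibrating the constants in this last step. The naive comparison of $dist(f,g)\geq\delta$ (from the bias gap) against $dist(f,g)<C_2\cdot O(\delta)$ only tells one that $c$ must be chosen small relative to $C_2$; it does not by itself produce a contradiction. The genuine contradiction requires using that the surplus of at least $\delta\cdot 2^{n-1}$ sets containing $k$ is forced for \textit{every} $k\in[n]$ at once, and that this rigid element-count profile cannot be realised as a small simply-rooted perturbation of any particular $\mathcal{F}_g$.
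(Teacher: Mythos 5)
Your set-up and first round of estimates track the paper's proof: identify $\mathcal{F}$ with $f$, use Theorem \ref{upper-shadow} to get $I^+(f)\leq 1$, combine with Corollary \ref{cor} to force the Fourier weight onto levels $0,1,2$, and aim to contradict the bias $\hat{f}(\emptyset)=-2\delta$ via FKN/Kindler--Safra. But there is a genuine gap at the decisive step, and you have in effect flagged it yourself. The whole argument lives or dies on the \emph{order} of the concentration error: since every approximator in Theorem \ref{FKN} and Theorem \ref{Kindler-Safra} is balanced while $\hat{f}(\emptyset)=-2\delta$, one always has $dist(f,g)\geq\delta$, so the stability theorems only yield a contradiction if the concentration defect is $O(\delta^2)$ (then $dist(f,g)<C\delta^2<\delta$ for small $c$). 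You only establish $W^1(f)=O(\delta)$ and hence $W^2(f)\geq 1-O(\delta)$, which gives $dist(f,g)=O(\delta)$ --- no contradiction for any choice of $c$, as you note. The proposed rescue, a ``case analysis over the short list of possible $g$'' showing that the element-count surplus ``cannot be realised as a small simply-rooted perturbation,'' is not an argument: modifying $f$ on an $O(\delta)$ fraction of points can change each $|\mathcal{F}_k|$ by $O(\delta)2^n$, which dwarfs the required surplus $\delta 2^{n-1}$, so no counting rigidity is available and you would need a structural use of simple-rootedness that you do not supply.

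Two further remarks. First, your own intermediate estimates already contain the fix: from the $k=2$ case of Corollary \ref{cor} together with $I^+(f)\leq 1$ one gets $\sum_i\hat{f}(i)\bigl(1-\max_j\hat{f}(j)\bigr)\leq 8\delta^2$, and your observation $\hat{f}(j)\leq 1-2\delta$ then forces $\sum_i\hat{f}(i)\leq 4\delta$, hence $\max_i\hat{f}(i)\leq 4\delta$ and $W^1(f)\leq 16\delta^2$ --- quadratic, not linear --- after which $W^2(f)\geq 1-O(\delta^2)$ and Kindler--Safra finishes the proof exactly as you intended. Second, the paper closes the same gap differently: it derives $\sqrt{W^1(f)}\,(1-\sqrt{W^1(f)})<8\delta^2$, splits into the cases $W^1(f)$ near $1$ (handled by FKN) and $W^1(f)=O(\delta^4)$ (handled by Kindler--Safra via $W^2(f)>1-14\delta^2$), in both cases obtaining an $O(\delta^2)$ distance to a balanced function. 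As submitted, your proof is incomplete.
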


\begin{proof}
Let $\mathcal{F} \subseteq 2^{[n]}$ be a family fulfilling the assumptions of the theorem. If $|\mathcal{F}|\geq 2^{n-1}$, then we are in the situation of Theorem \ref{abundant}*, and we are done. So let us assume that $|\mathcal{F}|=(\frac{1}{2}+\delta)2^n$, where $0<\delta<c$, $c$ being some absolute constant to be determined later. Assume by contradiction that $\mathcal{F}$ does not have an abundant element. Identify $\mathcal{F}$ with a boolean function \mbox{$f: \{0,1\}^n \to \{-1,1\}$} as in section $2.1$. Observe that for this function $f$:

\begin{equation} \label{zero-level}
\hat{f}(\emptyset)=-2\delta
\end{equation}

\begin{equation} \label{one-level}
\hat{f}(i)=I_i^+(f)-I_i^-(f)>0, \forall i \in [n]
\end{equation}

In a similar fashion to the proof of Theorem \ref{abundant}, we try to bound the total influence of $f$ from below and from above. Let us start with the upper-bound.

Theorem \ref{upper-shadow} provides a bound on the positive influence of $f$. Observe that this theorem implies  $I^+(f)\leq 1$. Indeed, the positive influence of $f$ is exactly the number of simply-rooted elements in $\mathcal{F}$, divided by $2^{n-1}$. We can also lower-bound the difference between the positive and the negative influence of $f$. Equation \ref{one-level} asserts that all the level-$1$ fourier coefficients are positive. Thus:

\begin{equation*}
\sqrt{W^1(f)}=\sqrt{\sum_{i=1}^n \hat{f}(i)^2} < \sum_{i=1}^n \hat{f}(i)=I^+(f)-I^-(f)
\end{equation*}

Combining the equation above with our bound on $I^+(f)$, we can upper bound the total influence:

\begin{equation} \label{up-bound}
I(f)=I^+(f)+I^-(f)<2-\sqrt{W^1(f)}.
\end{equation}

As for the lower-bound of the influence, we in fact provide two lower-bounds, both derived from Corollary \ref{cor} with different values of $k$. For $k=2$, using also equation \ref{up-bound}, we have:

\begin{equation*}
2-\sqrt{W^1(f)}> I^+(f)+I^-(f)\geq 2-W^1(f)-2\hat{f}(\emptyset)^2=2-W^1(f)-8\delta^2
\end{equation*}

After moving terms, this amounts to:

\begin{equation} \label{weight-1}
\sqrt{W^1(f)}(1-\sqrt{W^1(f)})<8\delta^2.
\end{equation}

Taking $k=3$ in Corollary \ref{cor}, and using equation \ref{up-bound} (here we use the weaker version $I(f)\leq 2$) gives:

\begin{equation*}
2\geq I(f)>3-W^2(f)-2W^1(f)-12\delta^2.
\end{equation*}

Or:

\begin{equation} \label{weight-2}
W^2(f)>1-2W^1(f)-12\delta^2.
\end{equation}

We shall presently see that it's impossible for both equations \ref{weight-1} and \ref{weight-2} to hold, thus deriving a contradiction. Assume that equation \ref{weight-1} holds. For $\delta$, and thus $c$, sufficiently small, this equation implies that either $\sqrt{W^1(f)}<9\delta^2$ or $1-\sqrt{W^1(f)}<9\delta^2$.

Assume the latter. Then $\sqrt{W^1(f)}>1-9\delta^2$, hence $W^1(f)>1-18\delta^2$. By Theorem \ref{FKN} (FKN), this means that $f$ is at most $18C_1\delta^2$-distant from some balanced function, $g$. On the other hand, we know that $\hat{f}(\emptyset)=-2\delta$, so $f$ is at least $\delta$-distant from any balanced function. But if $c$, and thus $\delta$, is sufficiently small, then 
$\delta>18C_1\delta^2$, and we arrive at a contradiction.

Assume, then, that $\sqrt{W^1(f)}<9\delta^2$. For $c$ sufficiently small, this implies $W^1(f)<\delta^2$. Plugging this inequality to equation \ref{weight-2}, we learn that:

\begin{equation*}
W^2(f)>1-14\delta^2.
\end{equation*}

Once again, we claim that this is impossible, provided that $c$ is sufficiently small. Indeed, on the one hand, from Theorem \ref{Kindler-Safra} (Kindler-Safra), $f$ is at most $14C_2\delta^2$-distant from some balanced function $g$, and on the other hand $f$ is at least $\delta$-distant from any balanced function, and $\delta>14C_2\delta^2$ if $c$ is small enough.

In any case we arrive at a contradiction, thus proving the theorem.
\end{proof}

\section{Conclusion}
Theorem \ref{upper-shadow}, in the language of boolean functions, says that for any simply-rooted function $f:\{0,1\}^n \to \{-1,1\}$, $I^+(f)\leq 1$. As we have mentioned, this inequality is tight. Consider the following examples: 

\begin{enumerate}
\item $f_1(x)=\chi_{\{1\}}(x)$.
\item $f_2(x)= \chi_{\{1,2\}}(x)$.
\item $f_3(x)= -\frac{1}{2}+\frac{1}{2}\chi_{\{1\}}(x)+\frac{1}{2}\chi_{\{2\}}(x)+\frac{1}{2}\chi_{\{1,2\}}(x)$.
\end{enumerate}

The first two examples are balanced functions. That is, the level-$0$ fourier coefficient is zero. In the last example, this coefficient is $-\frac{1}{2}$. It would be interesting to understand how big can $I^+(f)$ be, as $\hat{f}(\emptyset)$ grows smaller. We have managed to prove, although we do not include the proof here for the sake of brevity,  that if $f:\{0,1\}^n \to \{-1,1\}$ is simply-rooted, and $\hat{f}(\emptyset)<-\frac{1}{2}$, then $I^+(f)<1$. We make the following conjecture regarding the relation of these two qunatities:
\begin{conj}
Let $f:\{0,1\}^n \to \{-1,1\}$ be a simply-rooted function, and let $k \in [0,n-1]$, so that $\hat{f}(\emptyset)\leq -(1-2^{-k})$. Then $I^+(f)\leq (k+1)2^{-k}$.
\end{conj}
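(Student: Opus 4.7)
The plan is to translate the conjecture to the language of union-closed families and attempt induction on the ambient dimension $n$.

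First, set $\mathcal{G} := f^{-1}(1)$; this is union-closed by Lemma~\ref{dual}. Since $\hat{f}(\emptyset) = -1 + 2|\mathcal{G}|/2^n$, the hypothesis $\hat{f}(\emptyset)\le -(1-2^{-k})$ is equivalent to $|\mathcal{G}|\le 2^{n-k-1}$; and the identification used in Section~4 (via Lemma~\ref{shadow}) further gives $I^+(f)\cdot 2^{n-1} = |\partial^+\mathcal{G}\setminus\mathcal{G}|$. The conjecture therefore becomes: for every union-closed $\mathcal{G}\subseteq 2^{[n]}$ with $|\mathcal{G}|\le 2^{n-k-1}$, $|\partial^+\mathcal{G}\setminus\mathcal{G}| \le (k+1)\,2^{n-k-1}$. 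A direct count with $\mathcal{G}=2^T$ for any $T\subseteq[n]$ of size $n-k-1$ shows the bound is attained, so subcubes are the conjectural extremizers; note that the base case $k=0$ is exactly Theorem~\ref{upper-shadow}.

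For the inductive step, denote the universe by $U := \bigcup_{B\in\mathcal{G}} B \in \mathcal{G}$, and set $u := |U|$. When $u<n$, each $i\in[n]\setminus U$ contributes $|\mathcal{G}|$ distinct sets $B\cup\{i\}$ to $\partial^+\mathcal{G}\setminus\mathcal{G}$ (one per $B\in\mathcal{G}$), while the sets $B\cup\{i\}\in\partial^+\mathcal{G}\setminus\mathcal{G}$ with $i\in U$ constitute the in-universe upper shadow, giving the decomposition $|\partial^+\mathcal{G}\setminus\mathcal{G}|_{[n]} = (n-u)\,|\mathcal{G}| + |\partial^+_U\mathcal{G}\setminus\mathcal{G}|_U$. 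I would bound the second summand by applying the inductive hypothesis in dimension $u$ with $k'$ chosen maximally so that $|\mathcal{G}|\le 2^{u-k'-1}$, or falling back on the trivial bound $|\partial^+_U\mathcal{G}\setminus\mathcal{G}|_U\le 2^u-|\mathcal{G}|$ when no such $k'\ge 0$ exists (i.e.\ when $|\mathcal{G}|>2^{u-1}$). A short computation using the elementary inequality $(k+1)\,2^s\ge k+s+1$ for integers $s\ge 0$ verifies in both regimes that the sum is at most $(k+1)\,2^{n-k-1}$, closing the induction whenever $U\subsetneq[n]$.

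The remaining case, $U=[n]$, is the main obstacle. Here $[n]\in\mathcal{G}$ contributes nothing to the upper shadow, so one would like to pass to $\mathcal{G}':=\mathcal{G}\setminus\{[n]\}$. If $[n]$ is \emph{union-irreducible} in $\mathcal{G}$---meaning $[n]\ne B_1\cup B_2$ for any $B_1,B_2\in\mathcal{G}$ with $B_1,B_2\subsetneq[n]$---then $\mathcal{G}'$ is union-closed, its universe is strictly contained in $[n]$ (otherwise the union-closedness of $\mathcal{G}'$ would force $[n]\in\mathcal{G}'$), and one checks $|\partial^+\mathcal{G}\setminus\mathcal{G}|\le|\partial^+\mathcal{G}'\setminus\mathcal{G}'|$, reducing to the previous paragraph. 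The genuinely hard sub-case is when $[n]=B_1\cup B_2$ for some $B_1,B_2\in\mathcal{G}$ with $B_1,B_2\subsetneq[n]$, so $\mathcal{G}'$ is no longer union-closed. A natural alternative is to peel off a $\subseteq$-minimal element (atom) $A\in\mathcal{G}$, which is always union-irreducible, so that $\mathcal{G}\setminus\{A\}$ remains union-closed; but then one picks up an extra term $|\partial^+ A\setminus\mathcal{G}|\le n-|A|$ in the upper shadow that a plain induction on $|\mathcal{G}|$ does not absorb into the target $(k+1)\,2^{n-k-1}$. Overcoming this appears to require either a more refined inductive invariant (tracking, in addition to $|\mathcal{G}|$, a structural parameter of $\mathcal{G}$ such as the number or sizes of its atoms), or a compression-type operation on union-closed families that shrinks the universe without decreasing $|\mathcal{G}|$ or $|\partial^+\mathcal{G}\setminus\mathcal{G}|$. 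This combinatorial step, rather than any Boolean-analytic input, appears to be where the bulk of the work in a full proof would lie.
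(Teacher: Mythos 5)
You should first note that the statement you were given is Conjecture 6.1 of the paper: it is posed as an \emph{open problem}, and the paper contains no proof of it. The only things the paper establishes are the tight examples $C_k$ and the case $k=0$, which is Theorem \ref{upper-shadow} (observe that $k=1$ also follows from Theorem \ref{upper-shadow}, since $(1+1)2^{-1}=1$, so the first genuinely open case is $k=2$, with bound $3\cdot 2^{n-3}$); the authors additionally report, without proof, the strict inequality $I^+(f)<1$ when $\hat f(\emptyset)<-\frac12$. So there is no argument in the paper to compare yours against, and your submission is, by your own account, not a proof either.

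The parts you do claim are sound. The translation is correct: with $\mathcal{G}=f^{-1}(1)$ one has $\hat f(\emptyset)=-1+2^{1-n}|\mathcal{G}|$, so the hypothesis becomes $|\mathcal{G}|\le 2^{n-k-1}$ and the conclusion becomes $|\partial^+\mathcal{G}\setminus\mathcal{G}|\le (k+1)2^{n-k-1}$, attained by $\mathcal{G}=2^T$ with $|T|=n-k-1$. The decomposition $|\partial^+\mathcal{G}\setminus\mathcal{G}|=(n-u)|\mathcal{G}|+|\partial^+_U\mathcal{G}\setminus\mathcal{G}|_U$ for a universe $U$ of size $u<n$ is correct, and the arithmetic does close: taking both $k$ and $k'$ maximal forces $u-k'=n-k$, so the two summands are at most $(n-u)2^{n-k-1}$ and $(k-(n-u)+1)2^{n-k-1}$ respectively, and the fallback regime $|\mathcal{G}|>2^{u-1}$ reduces to $(n-u)2^u\le (k+1)2^{n-k-1}$, which is your elementary inequality. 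Deleting a union-irreducible maximum $[n]$ is likewise fine. The genuine gap is exactly the case you flag --- $[n]\in\mathcal{G}$ with $[n]=B_1\cup B_2$ for proper $B_1,B_2\in\mathcal{G}$ --- and it is not a technicality: your reductions dispose only of families whose universe can be shrunk, whereas a generic union-closed family has a reducible maximum, so essentially the whole difficulty survives. It is telling that even the $k=0$ case is not proved in the paper by any peeling or induction, but by a global structural input (Kotlov's theorem on connected components of large induced subgraphs of $Q_n$, Theorem \ref{Kotlov}); this strongly suggests the missing step requires an isoperimetric or stability-type idea rather than a more refined inductive invariant. In short: correct reformulation, correct but peripheral reductions, and the core of the conjecture remains untouched.
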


The above conjecture, if true, would be tight in the following sense. Let \mbox{$k \in [0,n-1]$}. Then \mbox{$C_k: \{0,1\}^n \to \{-1,1\}$}, defined by  \mbox{$C_k^{-1}(-1)=\{x\in \{0,1\}^n| x_1=1 \vee \dots \vee x_k=1\}$}, is  a simply-rooted function,   \mbox{$\widehat{C_k}(\emptyset)= -(1-2^{-k})$}, and  \mbox{$I^+(C_k)=I(C_k)=(k+1)2^{-k}$}. We do not make a conjecture as to the uniqueness of the function. In fact, notice that, in the examples above, both $f_1$ (which is actually $C_1$) and $f_2$ achieve this bound. \\
\vspace{2.5mm}
This conjecture should be compared with the well-known \textit{edge-isoperimetric inequality} for the Hamming cube, which implies the following:

\begin{thm} \label{edge}
Let $f:\{0,1\}^n \to \{-1,1\}$ be a boolean function, and let \mbox{$k\in [0,n-1]$}, so that \mbox{$-(1-2^{-k})\leq \hat{f}(\emptyset) \leq 0$}. Then $I(f) \geq (k+1)2^{-k}$.
\end{thm}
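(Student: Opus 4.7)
The plan is to recognize the statement as a direct corollary of the classical edge-isoperimetric inequality on the Hamming cube. First I would translate everything into the language of subsets. Let $B := f^{-1}(1)$ and set $m := |B|$. Since $\hat{f}(\emptyset) = \mathbb{E}[f] = (|f^{-1}(1)| - |f^{-1}(-1)|)/2^n = 2m/2^n - 1$, the hypothesis $-(1-2^{-k}) \le \hat{f}(\emptyset) \le 0$ is equivalent to $m \in [2^{n-k-1}, 2^{n-1}]$. Moreover, unpacking the definition of influence in the same way the paper has been doing, $I(f)$ equals the normalized edge boundary between $B$ and $B^c$:
\begin{equation*}
I(f) = \frac{e(B, B^c)}{2^{n-1}},
\end{equation*}
where $e(B,B^c)$ counts the edges of $Q_n$ with exactly one endpoint in $B$. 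So the desired conclusion becomes $e(B,B^c) \ge (k+1)\,2^{n-k-1}$.

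The next step is to apply the edge-isoperimetric inequality of Harper--Bernstein--Lindsey--Hart, which states that for every $B \subseteq V(Q_n)$ of size $m$,
\begin{equation*}
e(B, B^c) \;\ge\; m\bigl(n - \log_2 m\bigr),
\end{equation*}
with equality whenever $B$ is a subcube (i.e.\ $m$ is a power of $2$ and $B$ is translated accordingly). The problem therefore reduces to checking that the function $\varphi(m) := m(n - \log_2 m)$ is at least $(k+1)\,2^{n-k-1}$ on the interval $m \in [2^{n-k-1}, 2^{n-1}]$.

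Finally, I would verify this purely numerical inequality. Substituting $\beta := n - \log_2 m$, so that $\beta$ ranges over $[1, k+1]$ as $m$ ranges over the admissible interval, the claim becomes $g(\beta) := \beta\, 2^{k+1-\beta} \ge k+1$. A one-line calculus check yields $g'(\beta) = 2^{k+1-\beta}(1 - \beta \ln 2)$, so $g$ is unimodal, increasing up to $\beta = 1/\ln 2$ and decreasing afterwards; its minimum on $[1, k+1]$ is therefore attained at an endpoint. At $\beta = k+1$ one gets $g = k+1$, and at $\beta = 1$ one gets $g = 2^k \ge k+1$ for every $k \ge 0$, so the bound holds throughout the interval. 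There is no serious obstacle here; the only mildly delicate point is confirming that the minimum of the isoperimetric bound over the admissible range of $m$ is attained precisely at the lower endpoint $m = 2^{n-k-1}$, which is what makes the constant $(k+1)\,2^{-k}$ tight (as witnessed by the subcube example $C_k$ exhibited in the discussion).
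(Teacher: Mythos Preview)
Your proof is correct and follows exactly the route the paper indicates: the paper does not give its own proof of this theorem but simply states that it is implied by the edge-isoperimetric inequality on the Hamming cube, and you have filled in precisely those details. The translation $\hat f(\emptyset)=2m/2^{n}-1$, the identity $I(f)=e(B,B^{c})/2^{n-1}$, the isoperimetric bound $e(B,B^{c})\ge m(n-\log_{2}m)$, and the endpoint check on $[2^{n-k-1},2^{n-1}]$ are all accurate.
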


The very same function $C_k$ shows that this is tight. In other words, we conjecture that, out of all simply-rooted functions $f:\{0,1\}^n \to \{-1,1\}$ with $\hat{f}(\emptyset)=-(1-2^{-k})$, the same function, $C_k$ has both the smallest possible influence and the largest possible \textbf{positive} influence. 

\section{Acknowledgemnts}
I would like to thank Ohad Klein, Nathan Keller and my advisor, Gil Kalai, for many helpful suggestions.

\end{document}